\font\smallit=cmti10
\font\smalltt=cmtt10
\newcommand{\bburl}[1]{\textcolor{blue}{\url{#1}}}
\newcommand{\Rn}{Q_n}
\newcommand{\R}{\ensuremath{\mathbb{R}}}
\newcommand{\N}{\mathbb{N}}
\newcommand{\mP}{\mathcal{P}}
\newcommand{\mB}{\mathcal{B}}
\newcommand{\Cn}{Z}
\renewcommand{\l}{\lambda}
\renewcommand\section{\@startsection {section}{1}{\z@}
{-30pt \@plus -1ex \@minus -.2ex}
{2.3ex \@plus.2ex}
{\normalfont\normalsize\bfseries\boldmath}}
\renewcommand\subsection{\@startsection{subsection}{2}{\z@}
{-3.25ex\@plus -1ex \@minus -.2ex}
{1.5ex \@plus .2ex}
{\normalfont\normalsize\bfseries\boldmath}}
\renewcommand{\@seccntformat}[1]{\csname the#1\endcsname. }
\theoremstyle{plain}
\newtheorem{theorem}{Theorem}
\newtheorem{lemma}{Lemma}
\newtheorem{proposition}{Proposition}
\newtheorem{corollary}{Corollary}
\theoremstyle{definition}
\newtheorem{definition}{Definition}
\newtheorem{example}{Example}
\begin{document}

\begin{center}
\uppercase{\bf The bidirectional ballot polytope}
\vskip 20pt
{\bf Steven J. Miller}\\
{\smallit Department of Mathematics and Statistics, Williams College, Williamstown, Massachusetts}\\
{\tt sjm1@williams.edu; Steven.Miller.MC.96@aya.yale.edu}\\
\vskip 10pt
{\bf Carsten Peterson}\\
{\smallit Department of Mathematics, University of Michigan, Ann Arbor, Michigan}\\
{\tt carstenp@umich.edu}\\
\vskip 10pt
{\bf Carsten Sprunger}\\
{\smallit Department of Mathematics, Stanford University, Palo Alto, California}\\
{\tt csprun@stanford.edu}\\
\vskip 10pt
{\bf Roger Van Peski}\\
{\smallit Department of Mathematics, Massachusetts Institute of Technology, Cambridge, Massachusetts}\\
{\tt rvp@mit.edu}\\
\end{center}
\vskip 30pt
\centerline{\smallit Received: , Revised: , Accepted: , Published: } 
\vskip 30pt

\centerline{\bf Abstract}
\noindent
A bidirectional ballot sequence (BBS) is a finite binary sequence with the property that every prefix and suffix contains strictly more ones than zeros. BBS's were introduced by Zhao, and independently by Bosquet-M{\'e}lou and Ponty as $(1,1)$-culminating paths. Both sets of authors noted the difficulty in counting these objects, and to date research on bidirectional ballot sequences has been concerned with asymptotics. We introduce a continuous analogue of bidirectional ballot sequences which we call bidirectional gerrymanders, and show that the set of bidirectional gerrymanders form a convex polytope sitting inside the unit cube, which we refer to as the bidirectional ballot polytope. We prove that every $(2n-1)$-dimensional unit cube can be partitioned into $2n-1$ isometric copies of the $(2n-1)$-dimensional bidirectional ballot polytope. Furthermore, we show that the vertices of this polytope are all also vertices of the cube, and that the vertices are in bijection with BBS's. An immediate corollary is a geometric explanation of the result of Zhao and of Bosquet-M{\'e}lou and Ponty that the number of BBS's of length $n$ is $\Theta(2^n/n)$.

\pagestyle{myheadings}
\markright{\smalltt INTEGERS: 18 (2018)\hfill}
\thispagestyle{empty}
\baselineskip=12.875pt
\vskip 30pt

\section{Introduction}\label{sec:intr}

In \cite{Zh1}, Zhao introduced a family of combinatorial objects called bidirectional ballot sequences, defined as follows.

\begin{definition}
A finite 0-1 sequence is a \textbf{bidirectional ballot sequence} (\textbf{BBS}) if every prefix and every suffix contains strictly more ones than zeros. Let $B_n$ denote the number of bidirectional ballot sequences of length $n$.
\end{definition}

Bidirectional ballot sequences have a natural interpretation in terms of lattice paths. Suppose we start at $(0, 0)$ and take a finite number of steps either of the form $(1, 1)$ or $(1, -1)$. We call such a path a \textbf{standard lattice path}. We define the length of the path to be the number of steps it contains. We define the height of a point in the lattice path to be its $y$-coordinate. Bidirectional ballot sequences of length $n$ are in bijection with standard lattice paths of length $n$ whose unique minimum height is attained at the first point in the path, and whose unique maximum height is attained at the last point in the path. The bijection is given by identifying the digit `0' in a BBS with a step of the form $(1, -1)$ and the digit `1' with a step of the form $(1, 1)$ (for an example of this, see Section \ref{section:polytope_vertices}).

From this perspective, bidirectional ballot sequences were independently introduced by Bosquet-M{\'e}lou and Ponty \cite{BP} as a special type of what they call culminating paths. In particular, an $(a, b)$-culminating path is a sequence of lattice points starting at $(0, 0)$ such that each step is of the form $(1, a)$ or $(1, -b)$ and such that the unique minimum height is achieved at the first point and the unique maximum height is achieved at the last point. Thus bidirectional ballot sequences are in bijection with $(1, 1)$-culminating paths. In \cite{BP} it is noted that $(1, 1)$-culminating paths had been used in \cite{FGK} with connections to theoretical physics, and general $(a, b)$-culminating paths had been used in \cite{AGMML}, \cite{CR}, and \cite{PL} with connections to bioinformatics.

In both \cite{Zh1} and \cite{BP}, it is noted that unlike other easy to define classes of lattice paths (e.g. Dyck paths), the enumeration of BBS's is tricky; there is no obvious recursive structure to such paths. Both  authors focused on the asymptotics of $B_n$. In particular, \cite{BP} obtained a generating function in $n$ for the number of $(a, b)$-culminating paths of length $n$ with fixed height $k$ (the generating function for the $(1, 1)$ case was found in \cite{FGK}). Furthermore, they showed that $B_n \sim 2^n/4n$. Independently, \cite{Zh1} showed that $B_n = \Theta (2^n/n)$ and stated without detailed proof that $B_n \sim 2^n/4n$. Additionally in \cite{Zh1}, the author conjectured an even finer asymptotic expression for $B_n$. This conjecture was later proved by  Hackl, Heuberger, Prodinger and Wagner \cite{HHPW}, who refined the asymptotic expression even further using techniques from analytic combinatorics.

The motivation for the study of culminating paths in \cite{BP} was the observation that such paths had been independently introduced and utilized in disparate contexts (theoretical physics and bioinformatics) as well as a general interest in understanding subfamilies of lattice paths. However, the motivation in \cite{Zh1}, as well as our original motivation for studying BBS's, arises from additive combinatorics. Let $A \subset \mathbb{Z}$ be a finite set of integers. We define the sumset $A + A$ as those elements in $\mathbb{Z}$ expressible as $a + b$ with $a, b \in A$. Similarly, the difference set $A - A$ is those elements expressible as $a - b$ with $a, b \in A$. We say that $A$ is a \textbf{more sums than differences} (\textbf{MSTD}) set if $|A + A| > |A - A|$. Because of the commutativity of addition, one may intuitively expect that in general $|A - A| \geq |A + A|$. This intuition turns out to be correct in some contexts (see \cite{HM}), in particular if each element in $[n] := \{1,2,\ldots,n\}$ is independently chosen to be in $A$ with some probability $p(n)$ tending to zero). Let $\rho_n$ be the proportion of subsets of $[n]$ which are MSTD. In \cite{MO}, it was shown that $\rho_n > 2 \times 10^{-7}$ for $n \geq 15$, and in \cite{Zh2} it was shown that $\lim_{n \to \infty} \rho_n$ converges to a positive number; experimental data suggests this limit to be of order $10^{-4}$. Thus, in this sense, a positive proportion of sets are MSTD. However, the techniques in \cite{MO} are probabilistic, and to date no known constant density family of MSTD subsets of $[n]$ as $n \to \infty$ is known.

The best density explicit construction of MSTD sets is due to Zhao in \cite{Zh1} using BBS's. Let $B$ be a binary sequence of length $n$. We can associate to $B$ the set $A \subseteq [n]$ defined as $A := \{i : B_i = 1\}$. For example if $B = 01101$, then $A = \{2, 3, 5\}$. Those subsets $A$ of $[n]$ arising from BBS's have the property that $A + A = \{i : 2 \leq i \leq 2n\}$, which is to say that the sumset is as large as possible (similarly it turns out that the difference set is also as large as possible). Using this property, Zhao was able to translate those subsets of $[n]$ arising from BBS's and append extra elements to the fringes to obtain an MSTD set for each set arising from a BBS. From this, one immediately gets a density $\Theta(1/n)$ family of MSTD sets.

Motivated by the use of BBS's in additive combinatorics, in this paper we study the natural analgoue of BBS's in a continuous setting, which we call \textbf{bidirectional gerrymanders}; in the related paper \cite{MP}, we use similar ideas as in this paper to study the analogue of MSTD sets in a continuous setting.

We first set some notation and then describe our main results. Let $\mathbb{I}_n$ denote the set of all subsets of $\mathbb{R}$ consisting of exactly $n$ disjoint open intervals such that the leftmost interval starts at 0. Suppose $\mathcal{A} \in \mathbb{I}_n$. If we translate $\mathcal{A}$, then the sumset and difference set merely translate as well. Thus, when studying additive behavior, we do not lose any generality by restricting our attention to collections of intervals such that the leftmost interval starts at zero. We can topologize $\mathbb{I}_n$ by identifying it with $\mathbb{R}^{2n-1}_{\geq 0}$, the non-negative orthant: let $\mathcal{A}= I_1 \cup I_2 \cup \dots \cup I_n \in \mathbb{I}_n$ with $I_i$ to the left of $I_j$ for $i < j$. Suppose $I_j = (a_j, b_j)$. We then identify $\mathcal{A}$ with the vector $v_{\mathcal{A}} = [b_1 - a_1, a_2 - b_1, b_2 - a_2, a_3 - b_2, \dots, b_n - a_n]$. Thus the first entry is the length of the first interval, the second entry is the size of the gap between the first and second intervals, the third entry is the length of the second interval, etc. We shall find it convenient to restrict our attention to the following set: let $\mathbb{J}_n \subset \mathbb{I}_n$ be the set of collections of $n$ non-overlapping intervals such that the leftmost interval starts at zero, the length of each interval is between 0 and 1, and the gap between adjacent intervals is between 0 and 1 (if we scale $\mathcal{A} \in \mathbb{I}_n$ by $\alpha \neq 0$, then the sumset and difference set scale by $\alpha$ as well, so $\alpha \mathcal{A}$ has the same essential additive behavior as $\mathcal{A}$; note that up to scaling, every element of $\mathbb{I}_n$ is an element of $\mathbb{J}_n$). We can topologize $\mathbb{J}_n$ by identifying it with $C_{2n-1} = [0, 1]^{2n-1}$, the $2n-1$ dimensional unit cube\footnote{Because the endpoints of an open interval cannot be equal, strictly speaking we are taking $\mathbb{I}_n$ to be the set of all weakly increasing $2n$-tuples of points on the real line and identifying these with collections of $n$ intervals by treating them as endpoints (and correspondingly for $\mathbb{J}_n$). However, in the edge case when $a_j=b_j$, we still allow an `empty' interval at $a_j$, which is included in the data of an element of $\mathbb{I}_n$. Including these degenerate cases allows us to indeed identify $\mathbb{J}_n$ with the closed unit cube.}. For other ways to topologize $\mathbb{I}_n$ and related spaces, see \cite{MP}.

The bidirectional gerrymanders in $\mathbb{J}_n$ form a convex, compact polytope contained in $C_{2n-1}$ which we call the \textbf{bidirectional ballot polytope}, $\mathcal{P}_n$. This polytope has a number of extraordinary combinatorial features. In Section \ref{sec:vol_polytope} we formally define this polytope and show that $C_{2n-1}$ can be partitioned into $2n-1$ disjoint isometric copies of $\mathcal{P}_n$, which in particular shows that the volume of $\mathcal{P}_n$ is $1/(2n-1)$. In Section \ref{section:cube_vertices} we show that the vertices of $\mathcal{P}_n$ are vertices of $C_{2n-1}$. Finally in Section \ref{section:polytope_vertices} we show that the vertices of $\mathcal{P}_n$ are in bijection with $B_{2n+3}$, and that a particular subset of the vertices are in bijection with $B_{2n-1}$. From this we are able to immediately rederive geometrically that $|B_n| = \Theta (2^n/n)$, i.e., there are positive constants $\alpha$ and $\beta$ such that for all $n$ sufficiently large we have $\alpha 2^n/n \le |B_n| \le \beta 2^n/n$.

\section{The Bidirectional Ballot Cone and Polytope} \label{sec:vol_polytope}

We first set some notation. Let $m = 2n-1$ for some $n \in \mathbb{N}$.
\begin{definition}\label{def:bal_vecs}
Let the set of \textbf{left ballot vectors}, $L_n$, and the set of \textbf{right ballot vectors}, $R_n$, be the following sets of vectors in $\mathbb{R}^m$:
\begin{gather}
    L_n := \{[1, -1, 0, \dots, 0], [1, -1, 1, -1, 0, \dots, 0], \dots, [1, -1, \dots, 1, -1, 0]\}, \\
    R_n := \{[0, \dots, 0, -1, 1], [0, \dots, 0, -1, 1, -1, 1], \dots, [0, -1, 1, \dots, -1, 1] \}.
\end{gather}
We define $V_n$, the set of \textbf{ballot vectors}, as $V_n = L_n \cup R_n$.
\end{definition}

\begin{definition}
The \textbf{bidirectional ballot cone}, $\mathcal{B}_n$, is the set of $x \in \mathbb{R}^m$ such that $x \cdot w \geq 0$ for all $w \in V_n$. When the value of $n$ is obvious, we simply refer to it as $\mathcal{B}$.
\end{definition}

We now define the continuous analogue of BBS's, and show in Proposition \ref{prop:zhao_analogue} that it is the right generalization.

\begin{definition}
Let $\mathcal{A} \in \mathbb{I}_n$. We call $\mathcal{A}$ a \textbf{bidirectional gerrymander} if $v_{\mathcal{A}} \in \mathcal{B}$.
\end{definition}

\begin{proposition}\label{prop:zhao_analogue}
Suppose $\mathcal{A} = I_1 \cup \dots \cup I_n \in \mathbb{I}_n$ with endpoints ordered as before. Suppose the right endpoint of $I_n$ is $b$. Then, $\mathcal{A}$ is a bidirectional gerrymander if and only if $\mu(\mathcal{A} \cap [0, t]) \geq t/2$ and $\mu(\mathcal{A} \cap [b-t, b]) \geq t/2$ for all $t \in [0, b]$.
\end{proposition}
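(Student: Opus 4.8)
The plan is to translate both measure inequalities into a single statement about a piecewise-linear function, and then to recognize its values at the breakpoints as the dot products appearing in the definition of $\mathcal{B}$. Write $\mathcal{A} = I_1\cup\dots\cup I_n$ with $I_j=(a_j,b_j)$, so $a_1=0$, $b_n=b$, and set $\ell_j := b_j-a_j = (v_{\mathcal{A}})_{2j-1}$ (the interval lengths) and $g_j := a_{j+1}-b_j = (v_{\mathcal{A}})_{2j}$ (the gaps). Define $f\colon[0,b]\to\mathbb{R}$ by $f(t):=\mu(\mathcal{A}\cap[0,t])-t/2$. Since $\mu(\mathcal{A}\cap[0,t])$ increases at unit rate on each $I_j$ and is constant on each gap, $f$ is continuous and piecewise linear with slope $+1/2$ on intervals and $-1/2$ on gaps, and $f(0)=0$. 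A continuous piecewise-linear function on a compact interval attains its minimum at a breakpoint, so $\mu(\mathcal{A}\cap[0,t])\ge t/2$ for all $t\in[0,b]$ if and only if $f(t)\ge 0$ at each of the breakpoints $a_1,b_1,a_2,b_2,\dots,a_n,b_n$ (the endpoints $0$ and $b$ being among these).

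Next I would evaluate $f$ at those breakpoints. Since $\mu(\mathcal{A}\cap[0,a_{k+1}])=\sum_{j=1}^{k}\ell_j$ and $a_{k+1}=\sum_{j=1}^{k}(\ell_j+g_j)$, for $1\le k\le n-1$ we get
\[ f(a_{k+1}) = \tfrac{1}{2}\Big(\sum_{j=1}^{k}\ell_j-\sum_{j=1}^{k}g_j\Big) = \tfrac{1}{2}\,v_{\mathcal{A}}\cdot w_k, \]
where $w_k$ is the $k$-th element of $L_n$ (the one with $2k$ nonzero entries). A similar computation gives $f(a_1)=0$, $f(b_1)=\ell_1/2$, and $f(b_k)=\tfrac{1}{2}(\ell_k+v_{\mathcal{A}}\cdot w_{k-1})$ for $2\le k\le n$, so the values at $a_1$ and at the right endpoints $b_1,\dots,b_n$ are automatically nonnegative once every $f(a_{k+1})$ is (and conversely $v_{\mathcal{A}}\cdot w_k<0$ forces $f(a_{k+1})<0$). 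Degenerate zero-length intervals or gaps merely merge some breakpoints and do not affect this. Hence $\mu(\mathcal{A}\cap[0,t])\ge t/2$ for all $t\in[0,b]$ if and only if $v_{\mathcal{A}}\cdot w\ge 0$ for every $w\in L_n$.

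For the suffix inequality I would use a reflection. The set $\mathcal{A}':=\{\,b-x : x\in\mathcal{A}\,\}$ again lies in $\mathbb{I}_n$ (its leftmost interval starts at $b-b_n=0$), it satisfies $\mu(\mathcal{A}'\cap[0,t])=\mu(\mathcal{A}\cap[b-t,b])$, and reflection reverses the order of intervals and gaps, so $v_{\mathcal{A}'}$ is $v_{\mathcal{A}}$ read backwards. Reversing a vector of $L_n$ produces a vector of $R_n$ and vice versa, so applying the previous paragraph to $\mathcal{A}'$ yields: $\mu(\mathcal{A}\cap[b-t,b])\ge t/2$ for all $t\in[0,b]$ if and only if $v_{\mathcal{A}}\cdot w\ge 0$ for every $w\in R_n$. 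Combining, both measure inequalities hold simultaneously if and only if $v_{\mathcal{A}}\cdot w\ge 0$ for all $w\in V_n=L_n\cup R_n$, i.e. if and only if $v_{\mathcal{A}}\in\mathcal{B}$, which is precisely the condition that $\mathcal{A}$ is a bidirectional gerrymander.

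The substantive point is the identity $f(a_{k+1})=\tfrac{1}{2}v_{\mathcal{A}}\cdot w_k$; after that the proposition follows formally. The only thing to be careful about — and it is bookkeeping rather than a genuine obstacle — is the indexing: matching the $n-1$ left endpoints $a_2,\dots,a_n$ with the $n-1$ vectors of $L_n$, checking that no further constraint is needed at $b_n$ beyond those already imposed by $L_n$, and confirming that the degenerate empty intervals allowed in the definition of $\mathbb{I}_n$ introduce no breakpoint with negative $f$-value.
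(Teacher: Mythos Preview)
Your proof is correct and follows essentially the same approach as the paper: both reduce the measure condition to the nonnegativity of the piecewise-linear function $t\mapsto \mu(\mathcal{A}\cap[0,t])-t/2$ (the paper writes it as $\mu(\mathcal{A}\cap[0,t])-\mu((\mathbb{R}\setminus\mathcal{A})\cap[0,t])$, which is $2f$), observe that its minima occur at the left endpoints $a_{k+1}$, and identify the values there with the dot products against the left ballot vectors. Your version is more explicit---you compute $f(a_{k+1})=\tfrac{1}{2}v_{\mathcal{A}}\cdot w_k$ exactly and handle the suffix condition via the reflection $\mathcal{A}'=b-\mathcal{A}$ rather than saying ``similarly''---but the underlying argument is the same.
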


\begin{proof}
Clearly if these measure conditions hold, then $\mathcal A$ is a bidirectional gerrymander, as setting $t$ to be left and right endpoints of the $I_i$ yields the nonnegativity conditions of pairing with the ballot vectors. The condition $\mu(\mathcal{A} \cap [0, t]) \geq t/2$ is equivalent to the non-negativity of $\mu(\mathcal{A} \cap [0,t]) - \mu((\R \setminus \mathcal{A}) \cap [0,t])$. For $t \in [0,b]$, $\mu(\mathcal{A} \cap [0,t]) - \mu((\R \setminus \mathcal{A}) \cap [0,t])$ takes a local minimum only if $t$ is a left endpoint of an interval $I_i$. Hence if $v_\mathcal{A} \cdot w \geq 0$ for all $w \in L_n$, then the function is nonnegative at its minima and so the first measure condition holds. Similarly, the second measure condition holds as well by the nonnegativity of pairing with the right ballot vectors.
\end{proof}

A BBS in the sense of \cite{Zh1} is a binary sequence for which any subsequence truncated on the left or right contains more $1$'s than $0$'s, and Proposition \ref{prop:zhao_analogue} shows that a bidirectional gerrymander is a subset of $\mathbb{R}$ contained in $[0, a]$ for which any subset obtained by truncating on the left or right contains ``more'' points (in a measure theoretic sense) in the original set than points not in this set. It is thus clear that they are a natural analogue, but, as we shall see, what is surprising is that they can be used to prove results about standard (discrete) BBS's.

\begin{definition}
The \textbf{bidirectional ballot polytope} $\mathcal{P}_n$, is defined as $\mathcal{B}_n \cap C_{m}$. Equivalently, it is those vectors $v_{\mathcal{A}}$ such that $\mathcal{A} \in \mathbb{J}_n$ is a bidirectional gerrymander. When the value of $n$ is obvious, we shall refer to it simply as $\mathcal{P}$.
\end{definition}

\begin{figure}[h]
\begin{center}
\includegraphics[scale=0.5]{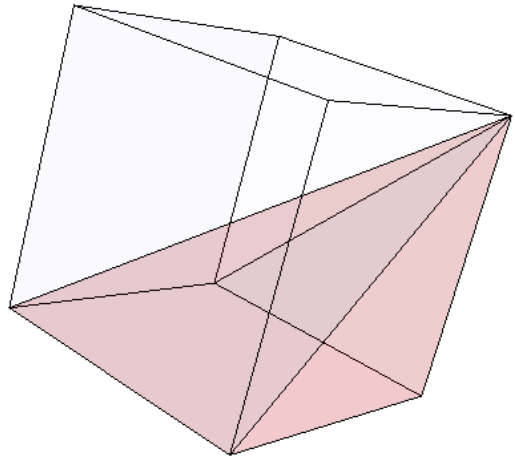}
\caption{The polytope $\mP_2$ (red) sitting inside $C_3$. Notice that adding two additional copies of $\mP_2$, rotated about the main diagonal of the cube by $2 \pi/3$ and $4 \pi/3$ respectively, would result in a partition of $C_3$ (neglecting overlap of boundaries).}
\end{center}
\end{figure}

\begin{definition}
Let $Z_m$ be the cyclic group of order $m$ with generator $\rho$. Let $Z_m$ act on $\mathbb{R}^{m}$ by cyclically permuting the entries (e.g. $\rho^2([0, 1, 2, 3, 4]) = [3, 4, 0, 1, 2]$). For a given set of vectors $V$ and $\sigma \in Z_m$, let $\sigma(V) := \{\sigma(v): v \in V\}$ with $\sigma \in Z_m$. For each $\sigma \in Z_m$, define $\mathcal{B}_\sigma$ by
\begin{gather}
\mathcal{B}_\sigma\ :=\ \{v \in \mathbb{R}^m_{\geq 0} : v \cdot w \geq 0 \ \textnormal{for all} \ w \in \sigma(V_n)\},
\end{gather}
and $\mathcal{P}_\sigma$ likewise. Note that $\mathcal{B}_\sigma = \sigma^{-1}(\mathcal{B})$, and that $\mathcal{B} = \mathcal{B}_{\textnormal{Id}}$ and $\mP = \mP_{\textnormal{Id}}$.
\end{definition}

\begin{theorem}\label{thm:permute_polytope}
The non-negative orthant, $\mathbb{R}^{m}_{\geq 0}$, is contained in $\bigcup_{\sigma \in Z_m} \mathcal{B}_\sigma$. Furthermore, for $\sigma_1 \neq \sigma_2$, the interiors of $\mathcal{B}_{\sigma_1}$ and $\mathcal{B}_{\sigma_2}$ are disjoint.
\end{theorem}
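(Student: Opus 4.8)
The plan is to recode each $v$ in the non-negative orthant as a piecewise linear walk and to read the decomposition off from the position of the minimum of that walk; this is a continuous two-sided cycle lemma, where the two-sidedness is tamed by a symmetry special to the fact that $m=2n-1$ is odd. Concretely, given $v=(v_1,\dots,v_m)\in\mathbb{R}^m_{\ge 0}$, extend it $m$-periodically by $v_{j+m}:=v_j$ and set $F(p):=\sum_{j=1}^{p}(-1)^{j+1}v_j$ for $p\ge 0$, with the evident convention for $p<0$; this is a walk with unit slopes. Since every $v_j\ge 0$, $F$ increases on each step from an even index to an odd index and decreases on each step from an odd index to an even index, so over any window of $2m$ consecutive integers $F$ attains its minimum at an even index. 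Because $m$ is \emph{odd}, the substitution $j\mapsto j+m$ flips the sign $(-1)^{j+1}$, which yields both $F(p+2m)=F(p)$ (so $F$ descends to a function on $\mathbb{Z}/2m\mathbb{Z}$) and $F(p+m)=\Sigma-F(p)$, where $\Sigma:=F(m)=\sum_{j=1}^{m}(-1)^{j+1}v_j$. The second identity forces $\min F+\max F=\Sigma$ and shows that the minimizing positions of $F$ on $\mathbb{Z}/2m\mathbb{Z}$ are exactly the maximizing positions translated by $m$.

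Next I would relate the cones to $F$. Pairing $v$ against the left and right ballot vectors of Definition~\ref{def:bal_vecs} (essentially the computation in the proof of Proposition~\ref{prop:zhao_analogue}) shows that, for $v$ in the orthant, $v\in\mathcal{B}$ if and only if $F$ restricted to the window $\{0,1,\dots,m\}$ attains its minimum at the left endpoint $0$ and its maximum at the right endpoint $m$. Since $m$ is odd, $\rho^{2}$ already generates $Z_m$, so it suffices to treat the cones $\mathcal{B}_{\rho^{2j}}$ for $0\le j\le m-1$. Writing out the defining inequalities of $\mathcal{B}_{\rho^{2j}}$ — these come from the ballot vectors shifted cyclically by the \emph{even} amount $2j$, so the underlying $+/-$ pattern is undisturbed — one finds that $v\in\mathcal{B}_{\rho^{2j}}$ if and only if $F$ attains its minimum over the window $\{2j,2j+1,\dots,2j+m\}$ at the left endpoint and its maximum at the right endpoint. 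Using $F(p+m)=\Sigma-F(p)$ together with $\min F+\max F=\Sigma$, a short argument then shows this is equivalent to the single condition that $F$ attains its \emph{global} minimum over $\mathbb{Z}/2m\mathbb{Z}$ precisely at the position $2j$.

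The theorem follows quickly. As $j$ runs over $\{0,\dots,m-1\}$ the residue $2j\bmod 2m$ runs over all $m$ even residues modulo $2m$, and $F$ attains its global minimum at some even residue by the first paragraph, so every $v\in\mathbb{R}^m_{\ge 0}$ lies in some $\mathcal{B}_{\rho^{2j}}$; this proves $\mathbb{R}^m_{\ge 0}\subseteq\bigcup_{\sigma\in Z_m}\mathcal{B}_\sigma$. For the disjointness of the interiors: if $v$ lies in the interior of $\mathcal{B}_{\rho^{2j}}$ then all the defining inequalities are strict and every $v_i>0$, and propagating these strict inequalities up and down the walk forces $F(2j)<F(p)$ for every other $p$ in a period, i.e. $F$ has a \emph{unique} global minimum, at $2j$. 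Distinct $\sigma$ then produce distinct minimizing positions, so no point can lie in the interiors of two different $\mathcal{B}_\sigma$.

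The one place where genuine care is needed is the parity bookkeeping: a single cyclic shift of $v$ reverses the $+/-$ pattern (because $m$ is odd), so one might naively expect the rotated cones to alternate between a ``minimum first, maximum last'' condition and a ``maximum first, minimum last'' one. This is exactly what is circumvented by passing to the generator $\rho^{2}$ (shifts by even amounts preserve the sign pattern) and by the symmetry $F(p+m)=\Sigma-F(p)$, which makes ``$F$ attains its minimum at the left endpoint of a length-$m$ window'' automatically equivalent to ``$F$ attains its maximum at the right endpoint.'' Everything else — the explicit pairing with the ballot vectors, the short equivalence in the second paragraph, and the propagation of strict inequalities — is routine.
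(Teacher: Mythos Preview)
Your proof is correct and is essentially the same argument as the paper's, recast in lattice-path language: your partial-sum walk $F$ evaluated at the even positions is exactly the paper's sequence $v\cdot w_\ell$ with $w_\ell=\sum_{i=0}^{\ell}\tau^i(w)$ and $\tau=\rho^2$, so ``pick the even index where $F$ is globally minimized'' is precisely the paper's ``pick $\ell$ minimizing $v\cdot w_\ell$,'' and your symmetry $F(p+m)=\Sigma-F(p)$ plays the role of the paper's identity $\sum_{i=0}^{2n-2}\tau^i(w)=0$. The only cosmetic difference is that you unpack the two-sided ballot condition as ``min at the left endpoint and max at the right endpoint of a length-$m$ window'' and then use the symmetry to collapse it to a single global-minimum condition, whereas the paper encodes both sides at once via the differences $w_k-w_\ell$.
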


\begin{proof}
Let $\tau = \rho^2 \in \Cn_{m}$ be the cyclic shift by two places. Because $m$ is odd, $\tau$ generates $\Cn_{m}$. In particular, we see that the set of left and right ballot vectors $V_n$ as defined in Definition \ref{def:bal_vecs} is equal to
\begin{equation}
V_n\ =\ \left\{\sum_{i=0}^k \tau^{i}(w): 0 \leq k \leq 2n-3\right\},
\end{equation}
where $w=[1,-1,0,\ldots,0]$.
If $\ell < k \leq 2n-3$ then
\begin{equation}
\sum_{i=0}^k \tau^{i}(w)-\sum_{i=0}^\ell \tau^{i}(w)\ =\ \sum_{i=\ell+1}^k \tau^{i}(w)\ =\ \tau^{\ell+1} \sum_{i=0}^{k-\ell-1} \tau^{i}(w),
\end{equation}
and since $\sum_{i=0}^{2n-2} \tau^{i}(w) = [0,\ldots,0]$ we have similarly that, for $0 \leq k \leq \ell$,
\begin{equation}
\sum_{i=0}^k \tau^{i}(w)-\sum_{i=0}^\ell \tau^{i}(w)\ =\ \tau^{\ell+1} \sum_{i=0}^{(2n-2)+(k-\ell)} \tau^{i}(w).
\end{equation}

Then for each $\ell$ we have that
\begin{equation}
\left\{\sum_{i=0}^k \tau^{i}(w)-\sum_{i=0}^\ell \tau^{i}(w): 0 \leq k \leq 2n-2, k \neq \ell\right\} \ = \ \tau^{\ell+1}(V_n).
\end{equation}
Now let $w_k = \sum_{i=0}^k \tau^{i}(w)$, take any $v \in [0,1]^{m}$, and choose $0 \leq \ell \leq 2n-2$ minimizing $v \cdot w_\ell$ (this $\ell$ may not be unique). Then
\begin{equation}
v \cdot \left(\sum_{i=0}^k \tau^{i}(w)-\sum_{i=0}^\ell \tau^{i}(w)\right)\ \geq\ 0
\end{equation}
for all $0 \leq k \leq 2n-2$. Therefore $v \cdot r \geq 0$ for all $r \in \tau^{\ell+1}(V_n)$, so $v \in \mB_{\tau^{\ell+1}}$. This shows that $\R_{\geq 0}^{m} = \bigcup_{\sigma \in \Cn_{m}} \mB_\sigma$. Intersecting with $C$ gives the corresponding result for $\mP$.

Conversely, if $v \in \text{Int}(\mB_{\tau^{\ell+1}}) \cap \text{Int}(\mB_{\tau^{k+1}})$ and $\tau^{\ell+1} \neq \tau^{k+1}$, then (because taking the interior simply changes the inequalities defining $\mB_{\tau^{\ell+1}}$ to strict ones) we have both
\begin{align*}
    v \cdot \left(\sum_{i=0}^k \tau^{i}(w)-\sum_{i=0}^\ell \tau^{i}(w)\right)\ >\ 0 \\
    v \cdot \left(\sum_{i=0}^\ell \tau^{i}(w)-\sum_{i=0}^k \tau^{i}(w)\right)\ >\ 0.
\end{align*}
This is a contradiction, so the interiors distinct regions $\mB_{\tau^{\ell+1}}$ are disjoint, and it follows immediately that the interiors of distinct regions $\mP_{\tau^{\ell+1}}$ are disjoint.
\end{proof}

\begin{corollary}\label{cor:volume}
The unit cube $C_m$ equals $\bigcup_{\sigma \in Z_m} \mathcal{P}_\sigma$. Furthermore, for $\sigma_1 \neq \sigma_2$, the interiors of $\mathcal{P}_{\sigma_1}$ and $\mathcal{P}_{\sigma_2}$ are disjoint. Consequently, the volume of $\mathcal{P}$ is exactly $1/m$.
\end{corollary}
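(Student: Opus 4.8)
The plan is to read off all three assertions from Theorem \ref{thm:permute_polytope}, using only that permutations of the coordinates are volume-preserving and that the boundary of a polytope in $\mathbb{R}^m$ is Lebesgue-null. For the set-theoretic claims (which are in fact essentially contained already in the proof of Theorem \ref{thm:permute_polytope}): intersecting the equality $\mathbb{R}^m_{\geq 0} = \bigcup_{\sigma \in Z_m} \mathcal{B}_\sigma$ with $C_m$, and using $C_m \subseteq \mathbb{R}^m_{\geq 0}$ together with the definition $\mathcal{P}_\sigma = \mathcal{B}_\sigma \cap C_m$, gives $C_m = \bigcup_{\sigma \in Z_m} \mathcal{P}_\sigma$; and monotonicity of the interior operator applied to $\mathcal{P}_\sigma \subseteq \mathcal{B}_\sigma$ gives $\mathrm{Int}(\mathcal{P}_\sigma) \subseteq \mathrm{Int}(\mathcal{B}_\sigma)$, so the disjointness of the $\mathrm{Int}(\mathcal{B}_\sigma)$ for distinct $\sigma$ forces disjointness of the $\mathrm{Int}(\mathcal{P}_\sigma)$.

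Next I would check that all the pieces are isometric, hence have equal volume. Since $\mathcal{B}_\sigma = \sigma^{-1}(\mathcal{B})$ and the cube is invariant under coordinate permutations, $\mathcal{P}_\sigma = \mathcal{B}_\sigma \cap C_m = \sigma^{-1}(\mathcal{B}) \cap \sigma^{-1}(C_m) = \sigma^{-1}(\mathcal{P})$; as $\sigma$ acts on $\mathbb{R}^m$ through a permutation matrix it is an isometry, so $\mathrm{vol}(\mathcal{P}_\sigma) = \mathrm{vol}(\mathcal{P})$ for every $\sigma \in Z_m$.

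Finally I would pass from disjoint interiors to additive volumes. Set $N = \bigcup_{\sigma \in Z_m} \partial \mathcal{P}_\sigma$; each $\mathcal{P}_\sigma$ is a polytope in $\mathbb{R}^m$, so $\partial \mathcal{P}_\sigma$ is a finite union of polytopes of dimension at most $m-1$ and is Lebesgue-null, whence so is $N$. Deleting $N$ from the cover $C_m = \bigcup_\sigma \mathcal{P}_\sigma$ yields $C_m \setminus N = \bigcup_{\sigma \in Z_m}(\mathcal{P}_\sigma \setminus N)$, a genuinely disjoint union because $\mathcal{P}_\sigma \setminus N \subseteq \mathrm{Int}(\mathcal{P}_\sigma)$ and the interiors are pairwise disjoint; hence $1 = \mathrm{vol}(C_m) = \mathrm{vol}(C_m \setminus N) = \sum_{\sigma \in Z_m} \mathrm{vol}(\mathcal{P}_\sigma \setminus N) = \sum_{\sigma \in Z_m}\mathrm{vol}(\mathcal{P}_\sigma) = m\,\mathrm{vol}(\mathcal{P})$, so $\mathrm{vol}(\mathcal{P}) = 1/m$ (in particular $\mathcal{P}$ is full-dimensional, since otherwise the cube would be a finite union of null sets). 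There is no genuine obstacle here; the only point requiring a little care is that one should not attempt to make the $\mathcal{P}_\sigma$ literally pairwise disjoint --- they share boundary facets --- but rather discard the null set $N$ of all boundaries before invoking additivity of Lebesgue measure. Everything else is bookkeeping on top of Theorem \ref{thm:permute_polytope}.
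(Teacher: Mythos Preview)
Your proof is correct and follows essentially the same approach as the paper: intersect the conclusion of Theorem~\ref{thm:permute_polytope} with $C_m$, observe that coordinate permutations preserve volume, and divide. You are simply more explicit than the paper about two points it leaves implicit---that interiors are monotone under inclusion (so disjointness passes from $\mathcal{B}_\sigma$ to $\mathcal{P}_\sigma$), and that the shared boundaries form a Lebesgue-null set so that additivity of volume goes through.
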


\begin{proof}
Intersecting the nonnegative orthant and the translates $\mathcal{B}_\sigma$ with $C_m$, Theorem \ref{thm:permute_polytope} yields that $C_m$ is partitioned into $m$ regions produced by permuting the coordinates of $\mP$. Because the matrix representing $\tau = \rho^2$ has determinant $1$ it leaves volume invariant. Therefore, $\text{Vol}(\mathcal{P}_\sigma) =\text{Vol}(\mathcal{P})$ for all $\sigma \in Z_m$, so  $\text{Vol}(\mathcal{P}) = 1/m$.
\end{proof}

\begin{corollary}\label{cor:necklace}
For any vector $v \in \R_{\geq 0}^{m}$, there exists $\sigma \in Z_{m}$ such that the vector

\noindent $v'=(v_1',v_2',\ldots,v_{m}')=\sigma(v)$ has the following property: For all $1 \leq k \leq n$,
\begin{equation}\label{eq:necklace1}
\sum_{i=1}^k (v_{2i-1}' - v'_{2i})\ \geq\ 0
\end{equation}
and
\begin{equation}\label{eq:necklace2}
\sum_{i=1}^k (v'_{2n-(2i-1)} - v'_{2n-2i}) \ \geq\ 0.
\end{equation}
If furthermore these are all positive, then $\sigma$ is unique.
\end{corollary}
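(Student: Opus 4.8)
The plan is to recognize Corollary \ref{cor:necklace} as a coordinate-wise restatement of Theorem \ref{thm:permute_polytope}. The first step is the translation. For $v' = \sigma(v)$ I would check that the inequalities \eqref{eq:necklace1} and \eqref{eq:necklace2} are exactly the conditions $v' \cdot w \ge 0$, $w \in V_n$, defining the bidirectional ballot cone $\mathcal{B}$: for $1 \le k \le n-1$ the left-hand side of \eqref{eq:necklace1} is $v' \cdot w$ with $w$ the left ballot vector having $k$ blocks $(1,-1)$ followed by zeros, and for $1 \le k \le n-1$ the left-hand side of \eqref{eq:necklace2} is $v' \cdot w$ for the corresponding right ballot vector; the remaining case $k = n$ in each family involves an out-of-range coordinate of $v'$ (read as $0$), after which that inequality is implied by the $k = n-1$ inequality together with the nonnegativity of one coordinate of $v'$, hence is redundant. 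Since $\sigma$ only permutes coordinates, $v' = \sigma(v) \in \mathbb{R}^m_{\ge 0}$ whenever $v \in \mathbb{R}^m_{\ge 0}$, so ``$v'$ satisfies \eqref{eq:necklace1} and \eqref{eq:necklace2}'' is equivalent to ``$\sigma(v) \in \mathcal{B}$'', i.e., to ``$v \in \mathcal{B}_\sigma$'' (using $\mathcal{B}_\sigma = \sigma^{-1}(\mathcal{B})$). Existence of $\sigma$ is then immediate: by Theorem \ref{thm:permute_polytope}, $\mathbb{R}^m_{\ge 0} = \bigcup_{\sigma \in Z_m} \mathcal{B}_\sigma$, so $v$ lies in some $\mathcal{B}_\sigma$, and for that $\sigma$ the vector $v' = \sigma(v)$ satisfies \eqref{eq:necklace1} and \eqref{eq:necklace2}.

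For uniqueness, suppose all the inequalities in \eqref{eq:necklace1} and \eqref{eq:necklace2} are strict for $v' = \sigma(v)$. A finite system of strict linear inequalities cuts out an open set, so $v' \in \text{Int}(\mathcal{B})$, whence $v = \sigma^{-1}(v') \in \sigma^{-1}(\text{Int}(\mathcal{B})) = \text{Int}(\mathcal{B}_\sigma)$. Suppose $\tilde\sigma \in Z_m$ also makes $\tilde\sigma(v)$ satisfy \eqref{eq:necklace1}--\eqref{eq:necklace2}, so $v \in \mathcal{B}_{\tilde\sigma}$. Each $\mathcal{B}_\sigma$ is a closed convex set with nonempty interior (it contains a neighborhood of, e.g., $[1,\varepsilon,1,\varepsilon,\dots,1]$ for small $\varepsilon>0$), and for a convex set the open segment joining any point of it to an interior point lies in the interior; so nudging $v \in \mathcal{B}_{\tilde\sigma}$ slightly toward an interior point of $\mathcal{B}_{\tilde\sigma}$ yields a point still in the open set $\text{Int}(\mathcal{B}_\sigma)$ and now also in $\text{Int}(\mathcal{B}_{\tilde\sigma})$. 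The disjointness-of-interiors part of Theorem \ref{thm:permute_polytope} then forces $\tilde\sigma = \sigma$.

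The only delicate point is this last step: Theorem \ref{thm:permute_polytope} gives only that \emph{interiors} of distinct $\mathcal{B}_\sigma$ are disjoint, and one must upgrade this, via convexity and full-dimensionality, to the statement that a point in some $\text{Int}(\mathcal{B}_\sigma)$ belongs to $\mathcal{B}_{\tilde\sigma}$ for no other $\tilde\sigma$. An alternative that sidesteps the topology is to rerun the computation in the proof of Theorem \ref{thm:permute_polytope}: writing $\sigma = \tau^{\ell+1}$ in the notation there, the inequalities \eqref{eq:necklace1}--\eqref{eq:necklace2} for $\sigma(v)$ say exactly that $\ell$ minimizes $v \cdot w_\ell$ over $0 \le \ell \le 2n-2$, their strict versions say this minimizer is unique, and a unique minimizer pins down $\ell$ and hence $\sigma$.
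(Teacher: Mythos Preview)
Your proposal is correct and matches the paper's intent: the paper states Corollary~\ref{cor:necklace} with no proof at all, treating it as an immediate restatement of Theorem~\ref{thm:permute_polytope}, and what you have written is exactly the natural unpacking of that restatement. Your handling of the $k=n$ cases (out-of-range index read as zero, inequality then redundant by nonnegativity) is the right way to reconcile the indexing in \eqref{eq:necklace1}--\eqref{eq:necklace2} with the ballot vectors in $V_n$, and both of your uniqueness arguments are valid; the second one (unique minimizer of $v\cdot w_\ell$) is preferable, since it sidesteps the mild ambiguity in the paper about whether $\mathcal{B}_\sigma$ is taken inside $\mathbb{R}^m$ or inside $\mathbb{R}^m_{\ge 0}$, which would otherwise require you to check that strict ballot inequalities alone place $v$ in the topological interior.
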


One interpretation of the above corollary is as follows. Suppose you have a necklace with an odd number of beads. On each bead you write a non-negative number. Then there exists some place where you can cut the necklace such that when you lay out the necklace and think of the sequence of values on the beads as a vector in $\mathbb{R}^{m}$, this vector is a bidirectional gerrymander. Furthermore, if the numbers you write on the beads are ``generic'', in the sense that the inequalities corresponding to \eqref{eq:necklace1} and \eqref{eq:necklace2} are strict, then there is exactly one such place you can cut the necklace.

\begin{center}
\begin{figure}[h]
\begin{tikzpicture}[scale = 1.2]
\draw (0, 1) circle [radius=0.4];
\draw ({-0.951*(1/3)}, {2/3*1 + (0.309)*(1/3)}) -- (-2/3*0.951, {1/3 + (0.309)*2/3});
\node at (0, 1) {\small 1.78};
\draw (-0.951, 0.309) circle [radius=0.4];
\draw ({2/3*(-0.951) + 1/3*(-0.588)}, {2/3*(0.309)+1/3*(-0.809)}) -- ({1/3*(-0.951) + 2/3*(-0.588)}, {1/3*(0.309) + 2/3*(-0.809)});
\node at (-0.951, 0.309) {\small 1.55};
\draw (-0.588, -0.809) circle [radius=0.4];
\draw ({2/3*(-0.588) + 1/3*(0.588)}, {2/3*(-0.809)+1/3*(-0.809)}) -- ({1/3*(-0.588) + 2/3*(0.588)}, {1/3*(-0.809) + 2/3*(-0.809)});
\node at (-0.588, -0.809) {\small 0.76};
\draw (0.588, -0.809) circle [radius=0.4];
\node at (0.588, -0.809) {\small 2.06};
\draw ({2/3*(0.588) + 1/3*(0.951)}, {2/3*(-0.809)+1/3*(0.309)}) -- ({1/3*(0.588) + 2/3*(0.951)}, {1/3*(-0.809) + 2/3*(0.309)});
\draw (0.951, 0.309) circle [radius=0.4];
\node at (0.951, 0.309) {\small 3.21};
\draw ({2/3*(0.951) + 1/3*(0)}, {2/3*(0.309)+1/3*(1)}) -- ({1/3*(0.951) + 2/3*(0)}, {1/3*(0.309) + 2/3*(1)});

\draw [dashed] ({0.5*(0.951 + 0.588) - 0.3*1}, {0.5*(0.309 - 0.809) + 0.3*((0.951 - 0.588)/(0.309 + 0.809))}) -- ({0.5*(0.951 + 0.588) + 0.3*1}, {0.5*(0.309 - 0.809) - 0.3*((0.951 - 0.588)/(0.309 + 0.809))});

\draw [->] (2, 0) -- (3, 0);

\draw (4, 0) circle [radius = 0.4];
\draw (5.2, 0) circle [radius = 0.4];
\draw (6.4, 0) circle [radius = 0.4];
\draw (7.6, 0) circle [radius = 0.4];
\draw (8.8, 0) circle [radius = 0.4];

\node at (4, 0) {\small 3.21};
\node at (5.2, 0) {\small 1.78};
\node at (6.4, 0) {\small 1.55};
\node at (7.6, 0) {\small 0.76};
\node at (8.8, 0) {\small 2.06};

\draw (3.4, 0) -- (3.6, 0);
\draw (4.4, 0) -- (4.8, 0);
\draw (5.6, 0) -- (6.0, 0);
\draw (6.8, 0) -- (7.2, 0);
\draw (8.0, 0) -- (8.4, 0);
\draw (9.2, 0) -- (9.4, 0);
\end{tikzpicture}
\caption{An example ``cut'' of a necklace as in Corollary \ref{cor:necklace}.}
\end{figure}
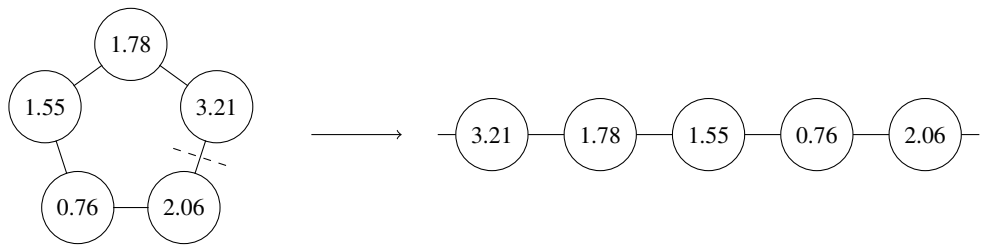
\end{center}

\section{Vertices of the Bidirectional Ballot Polytope are Vertices of the Cube} \label{section:cube_vertices}

In this section we show that the vertices of $\mathcal{P}_n$ are also vertices of $C_m$, the unit cube. We had previously defined $\mathcal{P}_n$ as the intersection of the unit cube with the ballot cone, which is equivalent to the set of vectors $[\ell_1, g_1, \dots, g_{n-1}, \ell_n]$ satisfying the below inequality:

\begin{gather}
\label{big_matrix}
\begin{array}{@{}r@{}l}
\begin{array}[]{@{}r@{}r}
	\text{cube vectors} & \left. \begin{array}{c} \vphantom{1} \\ \vphantom{-1} \\ \vphantom{1} \\ \vphantom{-1} \\ \vphantom{\vdots} \end{array} \right\{ \\
	\text{left ballot vectors} & \left. \begin{array}{c} \vphantom{1} \\ \vphantom{1} \\ \vphantom{\vdots} \end{array} \right\{ \\
	\text{right ballot vectors} & \left. \begin{array}{c} \vphantom{1} \\ \vphantom{1} \\ \vphantom{\vdots} \end{array} \right\{ \\
\end{array}
&
\left[
\begin{array}{c c c c c c c c c c c}
1 & 0 & 0 & 0 & 0 & \dots & 0 & 0 & 0 & 0 & 0 \\
-1 & 0 & 0 & 0 & 0 & \dots & 0 & 0 & 0 & 0 & 0 \\
0 & 1 & 0 & 0 & 0 & \dots & 0 & 0 & 0 & 0 & 0 \\
0 & -1 & 0 & 0 & 0 & \dots & 0 & 0 & 0 & 0 & 0 \\
\vdots & \vdots & \vdots & \vdots & \vdots & \ddots & \vdots & \vdots & \vdots & \vdots & \vdots \\
1 & -1 & 0 & 0 & 0 & \dots & 0 & 0 & 0 & 0 & 0 \\
1 & -1 & 1 & -1 & 0 & \dots & 0 & 0 & 0 & 0 & 0 \\
\vdots & \vdots & \vdots & \vdots & \vdots & \ddots & \vdots & \vdots & \vdots & \vdots & \vdots \\
0 & 0 & 0 & 0 & 0 & \dots & 0 & 0 & 0 & -1 & 1 \\
0 & 0 & 0 & 0 & 0 & \dots & 0 & -1 & 1 & -1 & 1 \\
\vdots & \vdots & \vdots & \vdots & \vdots & \ddots & \vdots & \vdots & \vdots & \vdots & \vdots \\
\end{array}
\right]
\end{array}
\begin{bmatrix}
\ell_1 \\
g_1 \\
\ell_2 \\
g_2 \\
\vdots \\
g_{n-1} \\
\ell_n
\end{bmatrix}
\ \geq \
\begin{bmatrix}
0 \\
-1 \\
0 \\
-1 \\
\vdots \\
0 \\
0 \\
\vdots \\
0 \\
0 \\
\vdots
\end{bmatrix}.
\end{gather}

The first collection of rows in the above matrix is necessary to ensure that we only deal with points inside of the unit cube. Thus we call any vector of the form $[0, \dots, 0, \pm 1, 0, \dots, 0]$ a \textbf{cube vector}.

Before proving the main result of this section, we must review a few concepts related to convex polytopes. We follow the terminology of \cite{BT}.

\begin{definition}
Let $P$ be a polytope in $\mathbb{R}^n$ defined by the inequalities $a_i^T x \geq b_i$ for $i \in \{1, 2, \dots, k\}$. Let $x^*$ be such that for some $i$, $a_i^T x^* = b_i$. Then, we say that the $i$\textsuperscript{th} constraint is \textbf{active} at $x^*$.
\end{definition}

\begin{definition}
A vector $x^* \in \mathbb{R}^n$ is called a \textbf{basic solution} if out of all of the constraints that are active at $x^*$, there is some collection of $n$ of them which is linearly independent. If $x^*$ is a basic solution that satisfies all of the constraints, then it is called a \textbf{basic feasible solution}.
\end{definition}

Part of what makes the study of convex polytopes interesting is that there are several equivalent but strikingly different ways of defining what the vertices of a polytope are. In particular, one definition is that a point $v$ is a vertex if and only if it is a basic feasible solution.

The following shorthand will be helpful in the proof of the main theorem of this section.

\begin{definition}
A matrix/vector is called \textbf{flat} if all of its entries are 0, 1, or -1.
\end{definition}

Let $\Rn$ denote the set of vertices in the polytope $\mathcal{P}_n$. Let $S_n$ denote the set of vertices of the unit cube $C_{m}$. The main result of this section is the following.

\begin{theorem}\label{theorem:cube}
All of the vertices of the bidirectional ballot polytope $\mathcal{P}_n$ are also vertices of the unit cube $C_{m}$; i.e., $\Rn \subset S_n$.
\end{theorem}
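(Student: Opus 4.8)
The plan is to show that every vertex $v$ of $\mathcal{P}_n$, being a basic feasible solution, must have all of its coordinates in $\{0,1\}$, i.e. it lies in $S_n$. Since $v$ is a vertex, there is a collection of $m = 2n-1$ linearly independent constraints (chosen from the cube vectors and the left/right ballot vectors) that are active at $v$. The key structural observation is that the matrix formed by any such collection of rows is \textbf{flat} (all entries $0,\pm 1$), and moreover the right-hand sides are all $0$ or $-1$. I would like to argue that solving such a flat, full-rank linear system $Av = b$ with $b \in \{0,-1\}^m$ forces $v \in \{0,1\}^m$.

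The natural route is via total unimodularity: I would show that the matrix $M$ appearing in \eqref{big_matrix} is totally unimodular (TU). Each cube-vector row is a single $\pm 1$; each left ballot vector is a prefix of the pattern $[1,-1,1,-1,\dots]$; each right ballot vector is a suffix of $[\dots,-1,1,-1,1]$. These consecutive-block $\pm$-alternating rows are exactly the kind that satisfy the Ghouila--Houri / interval-matrix criterion for total unimodularity (after a sign flip on alternate columns, each ballot row becomes a row of consecutive $1$'s, i.e. an interval matrix, which is TU; conjugating by the diagonal $\pm 1$ matrix preserves TU, and adjoining $\pm$ unit-vector rows preserves TU). Given that $M$ is TU, any nonsingular square submatrix $A$ (the active constraints at a vertex) has $\det A = \pm 1$, so by Cramer's rule $v = A^{-1}b$ has integer entries; combined with $v \in C_m = [0,1]^m$ this yields $v \in \{0,1\}^m$, hence $v$ is a vertex of the cube.

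An alternative, more hands-on approach that avoids quoting TU: induct on the coordinates. Order the coordinates $\ell_1, g_1, \ell_2, \dots$ Among the active ballot constraints, the "shortest" active left ballot vector active at $v$ reads $\ell_1 - g_1 + \ell_2 - \dots = 0$ on an initial block, and likewise shorter ones are differences that telescope; together with whichever cube constraints ($\ell_i = 0$, $\ell_i = 1$, $g_i = 0$, $g_i = 1$) are active, one can solve coordinate by coordinate and check each forced value lands in $\{0,1\}$. The TU argument is cleaner, so I would lead with that and only fall back to the telescoping computation if a self-contained proof is wanted.

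The main obstacle is establishing total unimodularity (or its hands-on substitute) cleanly: one must be careful that it is not merely the set $V_n$ of ballot vectors but the \emph{full} constraint matrix including cube vectors whose every square submatrix has determinant in $\{0,\pm 1\}$, and that the diagonal sign-change trick genuinely reduces the alternating ballot rows to an interval (consecutive-ones) matrix without disturbing the unit-vector rows. Once TU is in hand, the conclusion $Q_n \subset S_n$ is immediate from Cramer's rule and the fact that $\mathcal{P}_n \subseteq C_m$.
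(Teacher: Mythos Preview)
Your proposal is correct, and it follows the same overall architecture as the paper's proof: identify a vertex with a basic feasible solution, write it as $A^{-1}b^*$ for an invertible $m\times m$ submatrix $A$ of the constraint matrix and an integer right-hand side $b^*$, and then argue $\det A=\pm 1$ so that $x^*\in\mathbb{Z}^m\cap C_m\subset S_n$. Where you diverge is in how you establish $\det A=\pm 1$. The paper does this by a self-contained Gaussian-elimination argument (its Lemma~\ref{lemma_flat}): it tracks six possible row ``types'' through the elimination process and checks inductively that every intermediate matrix stays flat, so every pivot is $\pm 1$. You instead observe that the full constraint matrix is totally unimodular, via the clean trick of scaling the even-indexed columns by $-1$, which turns each ballot row into a row of consecutive $1$'s and leaves the cube rows as $\pm$ unit vectors; since interval (consecutive-ones) matrices are classically TU, and appending or negating unit-vector rows and scaling columns by $\pm 1$ all preserve TU, the conclusion follows. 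Your route is shorter and more conceptual, at the cost of invoking the standard TU result for interval matrices; the paper's route is longer but entirely elementary and self-contained. Both land in exactly the same place.
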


\begin{proof}
By the above discussion, we know that we must show that all basic feasible solutions are vertices of the cube. Throughout this proof, we let $n$ be fixed, and let $m = 2n-1$. Thus we unambiguously let $\mathcal{P} = \mathcal{P}_n$, $C = C_{2n-1}$, $Q = \Rn$, and $S = S_n$. Notice that $\mathbb{Z}^m \cap \mathcal{P} \subset S$. From this observation, we now describe the strategy for proving the theorem. Suppose $x^*$ is a basic solution whose corresponding constraints are $a_{i_1}$, $\dots$, $a_{i_m}$. Then $x^*$ satisfies
\begin{gather}
\label{matrix:A}
\begin{bmatrix}
\textnormal{---} a_{i_1} \textnormal{---} \\
\vdots  \\
\textnormal{---} a_{i_m} \textnormal{---}
\end{bmatrix}
x^*\ =\ \begin{bmatrix}
b_{i_1} \\
\vdots \\
b_{i_m}
\end{bmatrix}.
\end{gather}
Let $A$ be the matrix in \eqref{matrix:A}. Let $b^*$ be the vector on the right hand side in \eqref{matrix:A}. Thus $x^* = A^{-1} b^*$. Note that $b^* \in \mathbb{Z}^m$ since it is some subset of the entries in the vector on the right hand side of \eqref{big_matrix}. If we can show that $\det(A) = \pm 1$, it will imply that $A^{-1}$ has integer entries, and thus that $A^{-1} b^* \in \mathbb{Z}^m$. From the earlier observation, if $x^*$ is a basic feasible solution, then we must have that $A^{-1} b^* = x^* \in S$, which would prove the theorem.

Now we must show that if $A$ is invertible, then it has determinant $\pm 1$. In order to show this, we keep track of what happens to the determinant in the process of carrying out Gaussian elimination, which converts $A$ into the identity matrix. In particular, we show that at every step, the determinant changes by a factor of $\pm 1$. Since the identity matrix has determinant 1, we could then conclude that $A$ has determinant $\pm 1$. The only elementary row operation which potentially changes the absolute value of the determinant of a matrix is multiplying a row by a scalar. Thus it suffices to show that when Gaussian elimination is performed on $A$, no row is ever multiplied by a scalar other than $\pm 1$. In Gaussian elimination, a row is multiplied by a scalar to convert some non-zero entry in that row to a one. If every non-zero entry in that row is $\pm 1$, then we would simply need to multiply by $\pm 1$. Thus, we shall instead prove the stronger hypothesis that at every step of Gaussian elimination, the intermediate matrix is flat, and hence all of its non-zero entries are $\pm 1$. This is the content of Lemma \ref{lemma_flat}.
\end{proof}

Before proving Lemma \ref{lemma_flat}, we include an example to illustrate the method. Here we omit row swapping for clarity, and we obtain a permutation matrix, which has determinant $\pm 1$. At each step, the leading nonzero term in the bolded row is used to clear the corresponding column.
\begin{gather}
A_0:
 \begin{bmatrix}
0 & 1 & 0 & 0 & 0 \\
0 & 0 & 0 & 0 & 1 \\
\mathbf{1} & \mathbf{-1} & \mathbf{0} & \mathbf{0} & \mathbf{0} \\
0 & -1 & 1 & -1 & 1 \\
0 & 0 & 0 & -1 & 1
\end{bmatrix} \to \
A_1: \begin{bmatrix}
\mathbf{0} & \mathbf{1} & \mathbf{0} & \mathbf{0} & \mathbf{0} \\
0 & 0 & 0 & 0 & 1 \\
1 & -1 & 0 & 0 & 0 \\
0 & -1 & 1 & -1 & 1 \\
0 & 0 & 0 & -1 & 1
\end{bmatrix} \to \
A_2: \begin{bmatrix}
0 & 1 & 0 & 0 & 0 \\
0 & 0 & 0 & 0 & 1 \\
1 & 0 & 0 & 0 & 0 \\
\mathbf{0} & \mathbf{0} & \mathbf{1} & \mathbf{-1} & \mathbf{1} \\
0 & 0 & 0 & -1 & 1
\end{bmatrix} \\
\to \
A_3: \begin{bmatrix}
0 & 1 & 0 & 0 & 0 \\
0 & 0 & 0 & 0 & 1 \\
1 & 0 & 0 & 0 & 0 \\
0 & 0 & 1 & -1 & 1 \\
\mathbf{0} & \mathbf{0} & \mathbf{0} & \mathbf{-1} & \mathbf{1}
\end{bmatrix} \to \
A_4: \begin{bmatrix}
0 & 1 & 0 & 0 & 0 \\
\mathbf{0} & \mathbf{0} & \mathbf{0} & \mathbf{0} & \mathbf{1} \\
1 & 0 & 0 & 0 & 0 \\
0 & 0 & 1 & 0 & 0 \\
0 & 0 & 0 & 1 & -1
\end{bmatrix} \to \
A_5: \begin{bmatrix}
0 & 1 & 0 & 0 & 0 \\
0 & 0 & 0 & 0 & 1 \\
1 & 0 & 0 & 0 & 0 \\
0 & 0 & 1 & 0 & 0 \\
0 & 0 & 0 & 1 & 0
\end{bmatrix}.
\end{gather}

\begin{lemma}\label{lemma_flat}
In carrying out Gaussian elimination on the matrix $A$ as in Theorem \ref{theorem:cube}, all intermediate matrices are flat.
\end{lemma}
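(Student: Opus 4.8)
The plan is to analyze the structure of the matrix $A$ and track what Gaussian elimination does to it, row by row. First I would observe that the rows of $A$ come in three flavors: cube vectors $\pm e_j$, left ballot vectors of the form $[1,-1,1,-1,\ldots,1,-1,0,\ldots,0]$ (an alternating block of $\pm 1$ starting in position $1$ and ending in an even position with a $-1$), and right ballot vectors of the form $[0,\ldots,0,-1,1,-1,1,\ldots,-1,1]$ (an alternating block ending in position $m$ with a $+1$, and starting in an odd position with a $-1$). The key combinatorial fact I would isolate is that each row's support is an \emph{interval} of coordinates (contiguous), on which the entries alternate in sign; a cube vector is the degenerate length-$1$ case. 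I would then set up Gaussian elimination so that at each stage we pick a pivot row, use its leading nonzero entry to clear that column below (and, in the running example, the whole column), and argue by induction on the number of elimination steps that the working matrix stays flat.

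The heart of the argument is the induction step: when we add a $\pm 1$ multiple of the pivot row $r$ to another row $r'$ to clear a common leading column $c$, I must show the result is still flat. Because both $r$ and $r'$ (in the current flat matrix) have interval support with alternating signs, and they share the leftmost active column $c$, the overlap of their supports is again an interval starting at $c$ on which \emph{both} alternate with the same sign pattern (they agree at $c$ up to the $\pm 1$ scaling we are about to apply). Hence on the overlap the combination $r' \mp r$ is identically zero — this is exactly why the column gets cleared and no $\pm 2$'s appear — and outside the overlap only one of the two rows is nonzero, so the entries are unchanged and remain in $\{0,\pm 1\}$. The one thing that needs care is verifying that the new row $r' \mp r$ still has interval support with alternating signs, so that the invariant is genuinely maintained for use in later steps: the new support is the symmetric difference of two intervals sharing the left endpoint, which is again an interval (a suffix of the longer one), and the alternation is inherited. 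I would also need to handle pivot selection and row swaps, but swaps preserve flatness trivially, and one can always choose the pivot to be the row (among those not yet used as pivots) whose support starts furthest to the left, which guarantees the "shared leading column" structure above.

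The main obstacle I anticipate is bookkeeping the invariant precisely enough — in particular making the statement "interval support, alternating signs, and the elimination only ever combines rows sharing their left endpoint" into a clean inductive hypothesis that is manifestly preserved. A subtlety is that after several steps a working row is a combination of several original rows, so I should phrase the invariant intrinsically (each working row is flat with contiguous alternating support) rather than tracking provenance. Once that invariant is stated correctly, each elimination step reduces to the two-interval overlap computation sketched above, which is routine. Given the invariant, flatness of all intermediate matrices — hence $\det A = \pm 1$ whenever $A$ is invertible, completing the proof of Theorem \ref{theorem:cube} — is immediate.
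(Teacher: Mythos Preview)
Your approach is correct and is actually cleaner than the paper's, but it proves a slightly different (weaker, yet sufficient) statement. Your invariant---every working row has \emph{contiguous} support on which the entries alternate in sign---is preserved under \emph{forward} elimination (clear only below the pivot), and that is all you need for $\det A=\pm 1$: once the matrix is upper triangular with $\pm 1$ pivots you are done. The paper, by contrast, runs full Gauss--Jordan elimination to the identity. Under full elimination former pivot rows get modified at later steps, and your contiguity invariant fails for them: if $T$ was the pivot for column $c$ with support $[c,c+\ell]$ and the next pivot $T'$ has support $[c+1,c+\ell']$ with $\ell'>\ell$, then clearing column $c+1$ in $T$ produces a row supported on $\{c\}\cup[c+\ell+1,c+\ell']$, which is not an interval. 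This is exactly why the paper introduces six row types rather than one: types 4--6 (a lone $1$ in the first $k$ columns, then zeros, then an alternating block) are the former pivots, and types 1--3 are your contiguous-alternating rows. Your argument is the paper's argument restricted to types 1--3, which is all that matters if you never revisit a pivot.

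So: your route is genuinely different and more elementary---one invariant instead of six cases---at the cost of proving flatness only for forward elimination rather than for the full reduction the lemma literally asserts. Since the purpose of the lemma is solely to feed into the determinant computation in Theorem~\ref{theorem:cube}, your version is enough. If you want to match the paper's statement verbatim, you would need to enlarge your invariant to allow a single isolated $1$ to the left of the alternating block (for already-used pivots), which is precisely the paper's $\gamma_k$; the case analysis then becomes the paper's.
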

\begin{proof}
We proceed by induction. Let $A_k$ denote the matrix resulting from the $k$\textsuperscript{th} step of Gaussian elimination (i.e. the matrix obtained after ``clearing'' the first $k$ columns). We shall show that for each $k$, every row of the matrix $A_k$ is of exactly one of six types depending on the form of the first $k$ entries of that row and the last $m - k$ entries of that row (in the sequel, we will refer to this as saying that every row is one of the six types with respect to $k$).

We now describe these six types. Let $\alpha_n$ denote any sequence of length $n$ consisting of alternating plus ones and minus ones (e.g. $\alpha_3 = [-1, 1, -1]$ or $\alpha_1 = [1]$). Let $\beta_n$ denote the sequence of length $n$ consisting of all zeros. Let $\gamma_n$ denote any binary sequence of length $n$ containing exactly one one (e.g. $\gamma_4 = [0, 0, 1, 0]$). Let $\oplus$ refer to the operation of vector concatenation (e.g. $[1, 2, 3] \oplus [4, 5] = [1, 2, 3, 4, 5]$). The six types (with respect to $k$) are listed in Table \ref{table:firsttable}f.

\begin{table}[h]
\begin{center}
\begin{tabular}{l l l l}
Type 	&	First $k$	&	Last $m - k$	&	Example ($k = 3$, $m = 7$) \\
\hline
1	&	$\beta_k$	&	$\beta_{\ell \geq 1} \ \oplus \ \alpha_{j \geq 1} \ \oplus \beta_{m - k - \ell - j \geq 1}$	&	$[0, 0, 0 \ \big| \ 0, 1, -1, 0]$ \\
2	&	$\beta_k$	&	$\alpha_{\ell \geq 1} \ \oplus \ \beta_{n - k - \ell \geq 0}$	&	$[0, 0, 0 \ \big| \ 1, -1, 1, 0]$ \\
3	&	$\beta_k$	&	$\beta_{\ell \geq 1} \ \oplus \ \alpha_{m-k-\ell \geq 0}$	&	$[0, 0, 0 \ \big| \ 0, 0, 1, -1]$ \\
4 	&	$\gamma_k$	&	$\beta_{\ell \geq 1} \ \oplus \alpha_{j \geq 0} \ \oplus \beta_{m-k-\ell-j \geq 1}$	&	$[0, 1, 0 \ \big| \ 0, 0, 0, 0]$ \\
5	&	$\gamma_k$	&	$\alpha_{\ell \geq 1} \ \oplus \ \beta_{n - k - \ell \geq 0}$	&	$[0, 1, 0 \ \big| \ 1, -1, 1, 0]$ \\
6	&	$\gamma_k$	&	$\beta_{\ell \geq 1} \ \oplus \ \alpha_{m-k-\ell \geq 0}$	&	$[0, 1, 0 \ \big| \ 0, 0, 1, -1]$ \\
& & &
\end{tabular}
\caption{\label{table:firsttable} The six types with respect to $k$}
\end{center}
\end{table}

We now go through the inductive argument. For the base case, notice that when $k = 0$, the cube vectors are type 1, the left ballot vectors are type 2, and the right  ballot vectors are type 3. Thus the claim is proven in the base case.

Now for the inductive step, we shall show that if all rows of $A_k$ are of one of the above types with respect to $k$, then all rows of $A_{k+1}$ are of one of the above types with respect to $k+1$. As described in the proof of Theorem \ref{theorem:cube}, at step $k$ we must first find some row whose first $k$ entries are zero, and whose $k+1$ entry is $\pm 1$. We see then that we must select some row of type 2, call it $T$. We then subtract $T$ from all other rows whose $k+1$ entry is non-zero. Thus the only types we must worry about are types 2 and 5. Notice that when we subtract $T$ from a row of type 2, we get a row either or type 1, type 2, or type 3 with respect to $k+1$. When we subtract $T$ from a row of type 5, we get a row either of type 4, 5, or 6 with respect to $k+1$. All other rows remain the same. Thus when we catalog the new rows with respect to $k+1$, we get that those of type 1 become either type 1 or type 2. As mentioned before, those of type 2 become those of type 1, 2, or 3, except for row $T$ which becomes of type 4 or 5. Type 3 becomes type 2 or 3. Type 4 remains type 4 or becomes type 5. As mentioned before, type 5 becomes type 4, 5, or 6. Lastly, type 6 becomes type 5 or type 6. Thus, by induction, we have proven the desired statement, implying in particular that the matrix is flat at every step.
\end{proof}

\section{Vertices of the cube in the ballot region} \label{section:polytope_vertices}

In this section, we demonstrate that bidirectional ballot sequences of length $2n-1$ correspond in a natural way to $\Rn$, and we rederive the growth rate given in \cite{Zh1} and \cite{BP}.


\begin{definition}
	A \textbf{slope vector} is a vector $\lambda = [\lambda_1,\dots,\lambda_m] \in \R^m$ with $m\in \N$. To a slope vector $\lambda$, we associate the unique continuous piecewise linear function $f_\lambda: [0,m] \to \R$ such that $f(0) = 0$ and $f_\lambda'(x) = \lambda_i$ for $x\in (i-1,i)$ for each $1\le i\le m$.
\end{definition}

Given any binary sequence $b = b_1\cdots b_m$, we associate to this sequence the graph of the function $f_\lambda$ where $\lambda = (\lambda_1,\dots,\lambda_m)$ with $\lambda_i \coloneqq (-1)^{b_i-1}$.

\begin{example}\label{ex:discrete_graph}
The bidirectional ballot sequence $11011001111$ corresponds to the path
\begin{center}
\begin{tikzpicture}


\draw[->] (0,0) -- (11,0) node[anchor=north] {};
\draw[->, dashed] (0,15/3) -- (11,15/3) node[] {};

\draw[->] (0,0) -- (0,5.5) node[anchor=east] {};
		
\draw	(0,0) node[circle, fill, inner sep=2pt] {}
		(1,1) node[circle, fill, inner sep=2pt] {}
		(2,2) node[circle, fill, inner sep=2pt] {}
		(3,1) node[circle, fill, inner sep=2pt] {}
		(4,2) node[circle, fill, inner sep=2pt] {}
		(5,3) node[circle, fill, inner sep=2pt] {}
		(6,2) node[circle, fill, inner sep=2pt] {}
		(7,1) node[circle, fill, inner sep=2pt] {}
		(8,2) node[circle, fill, inner sep=2pt] {}
		(9,3) node[circle, fill, inner sep=2pt] {}
		(10,4) node[circle, fill, inner sep=2pt] {}
		(11,5) node[circle, fill, inner sep=2pt] {};


\draw[thick] (0,0) -- (1,1) -- (2,2) -- (3,1) -- (4,2) -- (5,3) -- (6,2) -- (7,1) -- (8,2) -- (9,3) -- (10,4) -- (11,5);



\end{tikzpicture}
\end{center}
\end{example}

This is a bijection from binary sequences of length $m$ to graphs of functions $f_\lambda$ with $\lambda \in \{\pm 1\}^m$. Recall from Section \ref{sec:intr} that the graphs which correspond to bidirectional ballot sequences are those of functions $f_\lambda$ where $f_\lambda(0) < f_\lambda(t) < f_\lambda(m)$ for all $0 < t < m$.

Now we will draw a correspondence between $\Rn$ and $B_{2n+3}$ through these graphs, as well as a correspondence between a certain subset of $\Rn$ and $B_{2n-1}$, by describing a way to interpret vectors $v\in C_{2n-1} = [0,1]^{2n-1}$ as paths as in the discrete case in such a way that the vertices of the ballot polytope are realized as exactly the graphs above. Given a vector $v = [v_1,\dots,v_{2n-1}] \in  C_{2n-1}$, define the slope vector $\lambda_v = [\lambda_1,\dots,\lambda_{2n-1}]$ by $\lambda_i \coloneqq (-1)^{i-1} (2v_i-1)$, and associate to $v$ the graph of the function $f_{\lambda_v}$.

\begin{example}\label{ex:graph}
The gap-parametrization vector $v= \left[\frac34, \frac13, \frac12, \frac23, 1\right] \in [0,1]^5$ gives the slope vector $\l_v = \left[\frac12, -\frac13, 0, \frac13, 1 \right]$, which gives the following graph of the function $f_{\l_v}$, where the values next to the points indicate the distance above the $x$-axis:

\begin{center}
\begin{tikzpicture}


\draw[->] (0,0) -- (11,0) node[anchor=north] {};
\draw[->, dashed] (0,6) -- (11,6) node[] {};
\draw	(0,-0.5) node[] {$0$}
		(2,-0.5) node[] {$1$}
		(4,-0.5) node[] {$2$}
		(6,-0.5) node[] {$3$}
		(8,-0.5) node[] {$4$}
		(10,-0.5) node[] {$5$};

\draw[->] (0,0) -- (0,6.5) node[anchor=east] {};
		
\draw	(0,0) node[circle, fill, inner sep=2pt] {}
		(2,2) node[circle, fill, inner sep=2pt](a) {}
		(4,4/6) node[circle, fill, inner sep=2pt](b) {}
		(6,4/6) node[circle, fill, inner sep=2pt](c) {}
		(8,2) node[circle, fill, inner sep=2pt](d) {}
		(10,6) node[circle, fill, inner sep=2pt](e) {};
		\node[left=0.1cm of a] {\tiny $\frac12$};
		\node[above=0.05cm of b] {\tiny $\frac16$};
		\node[above=0.05cm of c] {\tiny $\frac16$};
		\node[left=0.1cm of d] {\tiny $\frac12$};
		\node[below=0.05cm of e] {\tiny $\frac32$};


\draw[thick] (0,0) -- (2,2) -- (4,4/6) -- (6,4/6) -- (8,2) -- (10,6);



\end{tikzpicture}
\end{center}
\end{example}

Although the function $f_{\l_v}$ in Example \ref{ex:graph} has the property that it achieves global minimum and maximum values at it left and right endpoints (respectively), we will see that this is not always the case (see Example \ref{ex:process}). We determine this behavior more precisely now.

If $v = [v_1,\dots,v_{2n-1}] \in  C_{2n-1}$, then for $0\le k\le 2n-1$ we have
\begin{equation}\label{eq:gpfunc1}
    f_{\lambda_v}(k)\ =\ \sum_{j=1}^k (-1)^{j-1}(2v_j-1) = \begin{cases}
        2\sum_{j=1}^k (-1)^{j-1} v_j & \mbox{$k$ is even} \\
        -1 + 2\sum_{j=1}^k (-1)^{j-1} v_j & \mbox{$k$ is odd,}
    \end{cases}
\end{equation}
and similarly
\begin{equation}\label{eq:gpfunc2}
    f_{\lambda_v}(2n-1-k)\ =\ \begin{cases}
        f_{\lambda_v}(2n-1) - 2\sum_{j=1}^k (-1)^{j-1} v_{2n-j} & \mbox{$k$ is even} \\
        f_{\lambda_v}(2n-1) + 1 - 2\sum_{j=1}^k (-1)^{j-1} v_{2n-j} & \mbox{$k$ is odd.}
    \end{cases}
\end{equation}
One can see now that, even if $v\in \mP_n$, it is possible for the graph to fail the property stated above, i.e., to achieve a global maximum or minimum at a point in the interior of its interval of definition (again, see Example \ref{ex:process} for an explicit example). However, one can also see that if $v\in\mP_n$, it cannot fail this property to a great extent; namely, the values at the left and right endpoints will be within a distance of 1 from the maximum and minimum values, since the large sums in the RHS of \eqref{eq:gpfunc1} and \eqref{eq:gpfunc2} will be non-negative. Nonetheless, we would like the graphs of the functions $f_{\l_v}$ with $v\in\Rn$ to match the graphs of bidirectional ballot sequences in $B_{2n+3}$, and for that reason we give a way to modify a vector $v\in\Rn$ before associating it to a graph. Namely, we will add a sort of buffer to each side of the vector, so that the left and right endpoints get a leg up.

\begin{definition}
	If $v=[v_1,\dots,v_{2n-1}]\in  C_{2n-1}$, we define
\begin{equation*}
    \alpha(v)\ \coloneqq\ [1,0,v_1,v_2,\dots,v_{2n-2},v_{2n-1},0,1].
\end{equation*}
\end{definition}

We now present two correspondences, the first stated more naturally, and the second proven more naturally, which are nonetheless very closely related. The first correspondence is as follows.

\begin{theorem}\label{thm:ballot_vertices} The set
$\Rn$ is in bijection with $B_{2n+3}$, induced by the map
\begin{equation}\label{eq:bijection}
v \mapsto f_{\lambda_{\alpha(v)}}.
\end{equation}
\end{theorem}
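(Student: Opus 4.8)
The plan is to establish the bijection in two directions: first showing that for every $v \in \Rn$ the function $f_{\lambda_{\alpha(v)}}$ is (up to the standard identification of lattice-path graphs with binary strings) the graph of a genuine bidirectional ballot sequence of length $2n+3$, and then constructing the inverse. Throughout I would use the formulas \eqref{eq:gpfunc1} and \eqref{eq:gpfunc2} applied to the padded vector $\alpha(v) = [1,0,v_1,\dots,v_{2n-1},0,1]$, whose associated slope vector is $[1,-1,(2v_1-1),-(2v_2-1),\dots,(2v_{2n-1}-1),-1,1]$ (the sign pattern works out because $\alpha(v)$ has length $2n+3$, which is odd, and the two leading/trailing buffer entries shift parities correctly). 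The point of the buffer $[1,0,\cdots,0,1]$ is exactly the ``leg up'' mentioned in the text: the first two steps take the path from height $0$ up to height $1$ and back to height $0$... wait, more precisely the first step has slope $+1$ and the second slope $-1$, so $f(1)=1$, $f(2)=0$; by symmetry the last two steps bring it from its value at $2n+1$ up by $1$ then keep going up by... I would recompute these endpoint values carefully, but the upshot should be that the buffered endpoints now strictly dominate/are strictly dominated by the interior, precisely fixing the ``off by at most 1'' defect noted after \eqref{eq:gpfunc2}.

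\medskip

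\noindent\textbf{Forward direction.} Given $v \in \Rn$, Theorem \ref{theorem:cube} tells us $v$ is a vertex of the cube, so $v \in \{0,1\}^{2n-1}$; hence $\alpha(v) \in \{0,1\}^{2n+3}$ and each slope $\lambda_i \in \{\pm 1\}$, so $f_{\lambda_{\alpha(v)}}$ really is the graph of a binary string $b(v)$ of length $2n+3$. I then need to show $b(v)$ is a BBS, i.e., $f_{\lambda_{\alpha(v)}}(0) < f_{\lambda_{\alpha(v)}}(t) < f_{\lambda_{\alpha(v)}}(2n+3)$ for $0 < t < 2n+3$. Using $v \in \mathcal{B}_n$ (the ballot-cone inequalities, i.e. pairing with the left and right ballot vectors is nonnegative), the partial sums $\sum_{j=1}^k (-1)^{j-1} v_j$ appearing in \eqref{eq:gpfunc1} are $\ge 1/2$ at odd $k$ (and hence, by the cone conditions, the relevant expressions are bounded below appropriately at all $k$); combined with the $+1$ boost from the buffer this should give the strict inequality at every interior lattice point, and piecewise-linearity extends it to all $t$. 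The analogous argument using the right ballot vectors and \eqref{eq:gpfunc2} handles the suffix/maximum condition.

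\medskip

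\noindent\textbf{Inverse direction.} Given a BBS $c = c_1 \cdots c_{2n+3} \in B_{2n+3}$, I first want to argue that it must begin with $c_1 c_2 = 11$ and end with $c_{2n+2}c_{2n+3} = 11$: the prefix condition forces $c_1 = 1$, and then I claim $c_2 = 1$ as well — if $c_2 = 0$ then... actually this needs care, since $110$ is a legal prefix. The correct claim is probably that we can strip off the outermost $1$ at each end (since $c_1=1$ and $c_{2n+3}=1$ are forced) and what remains, $c_2\cdots c_{2n+2}$, after further adjustment corresponds to a vector; matching this against the definition $\alpha(v)=[1,0,v_1,\dots,v_{2n-1},0,1]$ suggests I actually want to show $c_1c_2 = 10$ is impossible hence... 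I would instead work backwards from $\alpha$: the map $v \mapsto \alpha(v)$ followed by the slope/graph identification is manifestly injective on $\{0,1\}^{2n-1}$, and its image consists of those length-$(2n+3)$ binary strings of the form $1\,0\,c_3\cdots c_{2n+1}\,0\,1$. So the real content is: (i) every such string in $B_{2n+3}$ arises from some $v\in\{0,1\}^{2n-1}$ with $v\in\Rn$ (this uses Proposition \ref{prop:zhao_analogue}-style reasoning / the cone inequalities run in reverse: the BBS prefix/suffix conditions on $1\,0\,c_3\cdots c_{2n+1}\,0\,1$ translate back into the ballot-vector inequalities on $v=[c_3,\dots,c_{2n+1}]$, because the two buffer steps contribute a net $0$ to every partial sum past position $2$); and (ii) conversely every BBS in $B_{2n+3}$ has this boundary shape $10\cdots01$. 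For (ii): $c_1=1$ and $c_{2n+3}=1$ are immediate; for $c_2=0$, suppose $c_2=1$, then the suffix starting at position $3$, namely $c_3\cdots c_{2n+3}$, must still be a ``ballot suffix'' and one should derive that the original interior vector would need an entry forcing a contradiction with the cone — I expect this is where a short counting/parity argument is needed, perhaps showing $c_2=1$ would make the corresponding $v$ violate one of the ballot inequalities, or alternatively that $B_{2n+3}$ strings with $c_2 = 1$ are in bijection with something of the wrong size.

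\medskip

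\noindent\textbf{Main obstacle.} The delicate point is establishing the boundary normal form ($1\,0\,\cdots\,0\,1$) for an arbitrary BBS of length $2n+3$, equivalently showing the padding map $\alpha$ hits \emph{every} such BBS and not just a proper subset — i.e. that the buffer $[1,0,\ldots,0,1]$ is not just sufficient but exactly right. I expect the cleanest route is to prove the two inclusions $\{f_{\lambda_{\alpha(v)}} : v \in \Rn\} \subseteq (\text{graphs of } B_{2n+3})$ and the reverse simultaneously by setting up the explicit inverse $c \mapsto [c_3, c_4, \ldots, c_{2n+1}]$ and checking, via \eqref{eq:gpfunc1}--\eqref{eq:gpfunc2} and the translation of BBS conditions into partial-sum inequalities, that it lands in $\Rn$ and is a two-sided inverse on the nose; the parity bookkeeping (tracking which partial sums pick up the $-1$ in \eqref{eq:gpfunc1}) and verifying that the forced values $c_1=c_2=c_{2n+2}=c_{2n+3}$ come out as $1,0,0,1$ respectively rather than some other pattern is the step most likely to require genuine care rather than routine checking.
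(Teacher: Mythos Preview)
Your high-level architecture matches the paper's: show the map lands in $B_{2n+3}$, then build an explicit inverse. But there is a genuine computational error that derails the surjectivity half. The slope associated to the $i$\textsuperscript{th} entry of a gap vector is $(-1)^{i-1}(2v_i-1)$, so the buffer $[1,0,\ldots,0,1]$ in $\alpha(v)$ contributes slopes $(+1,+1,\ldots,+1,+1)$ at positions $1,2,2n+2,2n+3$, not $(+1,-1,\ldots,-1,+1)$. Hence $f_{\lambda_{\alpha(v)}}(2)=2$, not $0$, and the associated binary string begins and ends $11\cdots11$, not $10\cdots01$. This is exactly what Example \ref{ex:process} shows. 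Your ``main obstacle''---proving every BBS of length $2n+3$ has shape $1\,0\,\cdots\,0\,1$---is therefore based on a miscomputation; in fact every BBS trivially begins and ends $11$ (the length-$2$ prefix and suffix must each have more ones than zeros), so the boundary shape $1\,1\,\cdots\,1\,1$ is automatic and there is nothing to prove there.

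The more serious consequence is that your proposed inverse $c\mapsto[c_3,\ldots,c_{2n+1}]$ is wrong: the passage from gap coordinates to binary bits is not the identity but a parity twist. Concretely, for $1\le j\le 2n-1$ the bit $b_{j+2}$ equals $1$ exactly when $(-1)^{j+1}(2v_j-1)=1$, i.e.\ when $j\equiv v_j\pmod 2$. The correct inverse, as in the paper, is $w_j=1$ if $j\equiv b_{j+2}\pmod 2$ and $w_j=0$ otherwise. Once you have this, checking that $w\in Q_n$ is exactly the translation of the BBS inequalities on $b$ (shifted by the $+2$ from the buffer) back into the ballot-vector inequalities via \eqref{eq:gpfunc1}--\eqref{eq:gpfunc2}, and the argument closes with no residual obstacle. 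So your forward direction is essentially fine once the arithmetic is corrected, but the inverse direction as written would not go through; the fix is the parity rule above rather than any delicate boundary analysis.
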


Before we prove Theorem \ref{thm:ballot_vertices}, we give an example of the process that induces the bijection.

\begin{example}\label{ex:process}
Consider the gap-parametrization vector $v = [0,0,1,0,0] \in [0,1]^5$, an element of $Q_3$. We shall obtain a bidirectional ballot sequence from $v$.
\emph{
We see that $v$ gives the slope vector $\l_v = [-1,1,1,1,-1]$. The graph of $f_{\l_v}$ is the following, where the values next to the points indicate the distance above the $x$-axis:
}

\begin{center}
\begin{tikzpicture}


\draw[->] (0,2) -- (11,2) node[anchor=north] {};
\draw[->, dashed] (0,4) -- (11,4) node[] {};
\draw	(2, 1.5) node[] {$1$}
		(4, 1.5) node[] {$2$}
		(6, 1.5) node[] {$3$}
		(8, 1.5) node[] {$4$}
		(10, 1.5) node[] {$5$};

\draw[<->] (0,-0.5) -- (0,7) node[anchor=east] {};
\draw	(-0.5,2) node[] {$0$};
		
\draw	(0,2) node[circle, fill, inner sep=2pt] {}
		(2,0) node[circle, fill, inner sep=2pt](a) {}
		(4,2) node[circle, fill, inner sep=2pt](b) {}
		(6,4) node[circle, fill, inner sep=2pt](c) {}
		(8,6) node[circle, fill, inner sep=2pt](d) {}
		(10,4) node[circle, fill, inner sep=2pt](e) {};
		\node[left=0.1cm of a] {\tiny $-1$};
		\node[above=0.05cm of b] {\tiny $0$};
		\node[above=0.05cm of c] {\tiny $1$};
		\node[left=0.1cm of d] {\tiny $2$};
		\node[below=0.05cm of e] {\tiny $1$};


\draw[thick] (0,2) -- (2,0) -- (4,2) -- (6,4) -- (8,6) -- (10,4);



\end{tikzpicture}
\end{center}
\emph{
This is \emph{not} the graph of a bidirectional ballot sequence. Namely, the graph passes below the $x$-axis and above the line $y = f_{\l_v}(5)$. Let's now consider $\alpha(v) = [1,0,0,0,1,0,0,0,1] \in [0,1]^9$, which gives slope vector $\l_{\alpha(v)} = [1,1,-1,1,1,1,-1,1,1]$ and leads to the following graph of $f_{\l_{\alpha(v)}}$.
}
\begin{center}
\begin{tikzpicture}


\draw[->] (0,0) -- (10,0) node[anchor=north] {};
\draw[->, dashed] (0,5) -- (10,5) node[] {};
\draw	(0, -0.5) node[] {$0$}
		(1, -0.5) node[] {$1$}
		(2, -0.5) node[] {$2$}
		(3, -0.5) node[] {$3$}
		(4, -0.5) node[] {$4$}
		(5, -0.5) node[] {$5$}
		(6, -0.5) node[] {$6$}
		(7, -0.5) node[] {$7$}
		(8, -0.5) node[] {$8$}
		(9, -0.5) node[] {$9$};

\draw[->] (0,0) -- (0,6) node[anchor=east] {};
\draw[dashed] (2,0) -- (2,5) node[] {};
\draw[dashed] (7,0) -- (7,5) node[] {};
		
\draw	(0,0) node[circle, fill, inner sep=2pt] {}
		(1,1) node[circle, fill, inner sep=2pt](a) {}
		(2,2) node[circle, fill, inner sep=2pt](b) {}
		(3,1) node[circle, fill, inner sep=2pt](c) {}
		(4,2) node[circle, fill, inner sep=2pt](d) {}
		(5,3) node[circle, fill, inner sep=2pt](e) {}
		(6,4) node[circle, fill, inner sep=2pt](f) {}
		(7,3) node[circle, fill, inner sep=2pt](g) {}
		(8,4) node[circle, fill, inner sep=2pt](h) {}
		(9,5) node[circle, fill, inner sep=2pt](i) {};
		\node[above=0.05cm of a] {\tiny $1$};
		\node[right=0.05cm of b] {\tiny $2$};
		\node[below=0.05cm of c] {\tiny $1$};
		\node[above=0.05cm of d] {\tiny $2$};
		\node[above=0.05cm of e] {\tiny $3$};
		\node[above=0.05cm of f] {\tiny $4$};
		\node[left=0.05cm of g] {\tiny $3$};
		\node[above=0.05cm of h] {\tiny $4$};
		\node[above=0.05cm of i] {\tiny $5$};


\draw[thick] (0,0) -- (1,1) -- (2,2) -- (3,1) -- (4,2) -- (5,3) -- (6,4) -- (7,3) -- (8,4) -- (9,5);



\end{tikzpicture}
\end{center}
\emph{
The portion of the graph between the vertical dotted lines is simply the graph of $f_{\l_v}$ translated in the plane by the vector $[2,2]$. This graph \emph{does} correspond to a bidirectional ballot sequence, namely $110111011$. We now prove that this process gives a bijection as in the statement of the theorem.
}
\end{example}

\begin{proof}[Proof of Theorem \ref{thm:ballot_vertices}]
By the correspondence between bidirectional ballot sequences and graphs of certain functions given in Example \ref{ex:discrete_graph}, it suffices to show that the map of \eqref{eq:bijection} puts $\Rn$ in bijection with
\begin{equation}\label{eq:defF}
    F\ =\ \{f_\mu: \mu \in \{\pm 1\}^{2n+3},\; f_\mu(0)\ <\ f_\mu(t)\ <\ f_\mu(2n+3)\ \text{ for all }\ t\in(0,2n+3)\}.
\end{equation}
If $v\in C_{2n-1}$ is any gap-parametrization vector, then, in light of \eqref{eq:gpfunc1}, \eqref{eq:gpfunc2}, and the fact that $f_{\lambda_v}$ achieves maxima and minima only at integer values, we have that $f_{\lambda_v}(0)-1 \le f_{\lambda_v}(t) \le f_{\lambda_v}(2n-1)+1$ for $t\in [0,2n-1]$ if and only if $v$ is a bidirectional gerrymander. Furthermore, if $v$ is a vertex of the cube $C_{2n-1}$, then $\alpha(v)$ is a vertex of $C_{2n+3} = [0,1]^{2n+3}$ so that $f_{\l_{\alpha(v)}}$ takes integers to integers. Since for any $v\in C_{2n-1}$ we have $f_{\l_{\alpha(v)}}(k+2) = f_{\l_v}(k) + 2$ for $0\le k \le 2n-1$, $f_{\l_{\alpha(v)}}(i) = i$ for $i=0,1,2$, and $f_{\l_{\alpha(v)}}(2n+1+i) = f_{\l_{\alpha(v)}}(2n+1)+i$ for $i=1,2$. Thus if $v$ is a vertex of $ C_{2n-1}$ then $f_{\l_{\alpha(v)}}(0) < f_{\l_{\alpha(v)}}(t) < f_{\l_{\alpha(v)}}(2n+3)$ for all $t\in(0,2n+3)$ if and only if $v\in \Rn$. It follows then that, since $\lambda_{\alpha(v)} \in \{\pm 1\}^{2n+3}$ when $v\in \Rn$, we indeed have that $f_{\l_{\alpha(v)}} \in F$, and so the map in \eqref{eq:bijection} does indeed take $\Rn$ to graphs of bidirectional ballot sequences in $B_{2n+3}$.

Injectivity of the map is clear. To show that the map is surjective, we provide an inverse. For a bidirectional ballot sequence $b=b_1\cdots b_{2n+3}$ of length $2n+3$, we define the vector $w = [w_1,\dots,w_{2n-1}]$, where
\begin{equation}
    w_j\ \coloneqq\ \begin{cases}
        1 & \mbox{if $j \equiv b_{j+2} \pmod 2$} \\
        0 & \mbox{if $j \not\equiv b_{j+2} \pmod 2$.}
    \end{cases}
\end{equation}
It is easily verified that the graph of $f_{\l_{\alpha(w)}}$ is the one associated to $b$. Moreover, the two statements directly following \eqref{eq:defF} imply that, since $w\in\{\pm 1\}^{2n-1}$ and the graph of $f_{\l_{\alpha(w)}}$ is that of a bidirectional ballot sequence, we must have that $w\in \Rn$. It is clear that this map is both a right- and left-inverse of the map given by \eqref{eq:bijection}.
\end{proof}

We now give the second correspondence. Let $\mathcal{I}_n$ denote the interior of $\mathcal{B}_n$ in $\R^{2n-1}$. Let $T_n = \mathcal{I}_n \cap \Rn$, i.e. those vertices of $\mP_n$ in the interior of $\mathcal{B}_n$.
\begin{corollary}\label{cor:ballot_vertices}
    We have $T_n$ is in bijection with $B_{2n-1}$, induced by the map
    \begin{equation}
        v\ \mapsto\ f_{\l_v}.
    \end{equation}
\end{corollary}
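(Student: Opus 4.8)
The plan is to deduce Corollary \ref{cor:ballot_vertices} from Theorem \ref{thm:ballot_vertices} by understanding exactly how the ``buffer'' map $\alpha$ interacts with the strict-interior condition defining $T_n$. The key observation is that the inequalities cut out by the ballot vectors in $V_n$ are precisely the inequalities $\sum_{i=1}^k(v'_{2i-1}-v'_{2i})\ge 0$ and their right-hand analogues (as in Corollary \ref{cor:necklace}), and by \eqref{eq:gpfunc1}--\eqref{eq:gpfunc2} these translate into the statements $f_{\l_v}(2j) \le f_{\l_v}(0)$-type comparisons; more precisely, pairing $v$ with a left ballot vector of length $2k$ being nonnegative is equivalent to $f_{\l_v}(2k)\ge f_{\l_v}(0)=0$, and pairing with a right ballot vector is equivalent to the mirror condition at the right end. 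Since $f_{\l_v}$ takes its extrema only at integers, $v\in\mathcal{B}_n$ (a bidirectional gerrymander) is equivalent to $f_{\l_v}(0)-1\le f_{\l_v}(t)\le f_{\l_v}(2n-1)+1$ for all $t$, whereas $v\in\mathcal{I}_n$ is equivalent to the same chain of inequalities holding \emph{strictly} at all the relevant integer points. For a vertex $v\in\Rn$ (so $f_{\l_v}$ is integer-valued), strict inequality $f_{\l_v}(2k)>f_{\l_v}(0)$ is the same as $f_{\l_v}(2k)\ge f_{\l_v}(0)+1$, i.e. $f_{\l_v}(2k)\ge 1$; and at odd points the ``$-1$'' shift in \eqref{eq:gpfunc1} already gives us one unit of slack automatically.

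With this dictionary in hand, the main step is to show: for a vertex $v\in\Rn$, the function $f_{\l_v}$ itself satisfies $f_{\l_v}(0)<f_{\l_v}(t)<f_{\l_v}(2n-1)$ for all $t\in(0,2n-1)$ if and only if $v\in T_n=\mathcal{I}_n\cap\Rn$. The forward direction is immediate: if the strict global min/max are at the endpoints, then in particular all the pairings with ballot vectors are strictly positive, so $v\in\mathcal{I}_n$. For the reverse, suppose $v\in\Rn\cap\mathcal{I}_n$; since $f_{\l_v}$ is integer-valued and piecewise linear with slopes $\pm1$, and since $v\in\mathcal{I}_n$ forces $f_{\l_v}(2k)\ge 1$ for $1\le k\le n-1$ and $f_{\l_v}(2n-1)\ge 1$ at even-indexed partial sums from the right, one checks using the slope-$\pm1$ structure that the minimum over all integer points in $(0,2n-1)$ is at least $1>0=f_{\l_v}(0)$ (the odd integer points are handled by the built-in $-1$, the even ones by the strict ballot inequalities), and symmetrically the maximum is at most $f_{\l_v}(2n-1)-1$. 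Since extrema occur only at integers, this gives the strict chain on all of $(0,2n-1)$. Then by the correspondence of Example \ref{ex:discrete_graph}, $f_{\l_v}$ is the graph of a bidirectional ballot sequence of length $2n-1$, i.e. an element of $B_{2n-1}$.

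Finally I would package the bijection: the map $v\mapsto f_{\l_v}$ sends $T_n$ into the set of graphs of length-$(2n-1)$ BBS's, hence into $B_{2n-1}$; it is injective because $v$ is recovered from the slopes via $v_i=(1+(-1)^{i-1}\lambda_i)/2$; and it is surjective because given $b\in B_{2n-1}$ the slope vector $\lambda\in\{\pm1\}^{2n-1}$ with $\lambda_i=(-1)^{b_i-1}$ defines a cube vertex $v$ with $f_{\l_v}$ the graph of $b$, and the BBS property of $b$ forces (again via the integrality and the slope-$\pm1$ structure, reading the same equivalences backwards) that all the ballot-vector pairings are strictly positive, so $v\in T_n$. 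Equivalently, one can phrase this as: $T_n$ is exactly the subset of $\Rn$ on which the correspondence of Theorem \ref{thm:ballot_vertices} could have been carried out \emph{without} the buffer $\alpha$, and the buffer is needed precisely for the non-interior vertices.

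The step I expect to be the main obstacle is the careful verification, in the reverse direction, that strict positivity of all the ballot-vector pairings genuinely upgrades to the strict global inequalities $f_{\l_v}(0)<f_{\l_v}(t)<f_{\l_v}(2n-1)$ at \emph{every} interior point rather than merely at the even-indexed ones appearing in the pairings — this is where the slope-$\pm1$/integrality structure of $f_{\l_v}$ for $v\in\Rn$ must be used, together with the automatic unit of slack at odd integers coming from the $-1$ in \eqref{eq:gpfunc1}. Once that bookkeeping is pinned down, the rest is a routine repackaging of Theorem \ref{thm:ballot_vertices}'s proof.
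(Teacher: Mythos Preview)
Your proposal is correct and follows essentially the same approach as the paper's own proof, which simply says the argument is the same as that of Theorem~\ref{thm:ballot_vertices} with the observation that for $v\in T_n$ one already has $f_{\lambda_v}(0)<f_{\lambda_v}(t)<f_{\lambda_v}(2n-1)$ without needing the buffer $\alpha$. Your expanded treatment of the odd/even bookkeeping (in particular the parity observation that $f_{\lambda_v}(k)\equiv k\pmod 2$ for cube vertices, which is what makes the ``automatic unit of slack'' at odd integers work) is exactly the detail the paper leaves to the reader under ``similar reasoning as in the statements directly following~\eqref{eq:defF}.''
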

\begin{proof}
    The proof here is essentially the same as that of Theorem \ref{thm:ballot_vertices}. The point here is that, when $v\in T_n$, we already have $f_{\l_v}(0) < f_{\l_v}(t) < f_{\l_v}(2n-1)$, following similar reasoning as in the statements directly following \eqref{eq:defF}.
\end{proof}

Lastly, we use these correspondences along with our previous analysis of $\mP_n$ and its translates to obtain the growth rate in \cite{Zh1}.

\begin{corollary}\label{cor:upper}
For $\ell$ odd,
\begin{gather}
    B_\ell\ \geq\ \frac{2^\ell}{16 (\ell-4)}.
\end{gather}
\end{corollary}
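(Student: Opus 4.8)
The plan is to combine the volume computation from Corollary~\ref{cor:volume} with the vertex-counting bijections of Theorem~\ref{thm:ballot_vertices} and Corollary~\ref{cor:ballot_vertices}. Write $\ell = 2n-1$, so $m = \ell$ and we want a lower bound on $B_\ell = |B_{2n-1}|$. By Corollary~\ref{cor:ballot_vertices}, $|B_{2n-1}| = |T_n|$, the number of vertices of $\mathcal{P}_n$ lying in the interior $\mathcal{I}_n$ of the ballot cone. The idea is that a polytope of small volume sitting inside the unit cube, all of whose vertices are cube vertices (Theorem~\ref{theorem:cube}), must have many vertices: a polytope with few vertices cannot be ``spread out'' enough to have volume as large as $1/m$, since it would be contained in the convex hull of those few cube vertices, which has small volume.

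First I would make the heuristic precise. Since $\mathcal{P}_n \subseteq C_m$ and every vertex of $\mathcal{P}_n$ is a vertex of $C_m$ (Theorem~\ref{theorem:cube}), $\mathcal{P}_n$ is the convex hull of some subset of the $2^m$ vertices of the cube. If $\mathcal{P}_n$ has $N$ vertices, then $\mathcal{P}_n$ is a polytope on $N$ vertices inside $[0,1]^m$, and I want an upper bound on the volume of such a polytope in terms of $N$ and $m$. A convenient bound: any polytope with $N$ vertices can be triangulated into at most $\binom{N}{m}$ simplices (or, more crudely, is a union of at most $N - m$ simplices in a pulling triangulation), and each simplex with vertices among the cube vertices has volume at most... here one must be a little careful, but a simplex inside $[0,1]^m$ has volume at most $1/m!$ is too weak. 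Better: a lattice simplex with vertices in $\{0,1\}^m$ has volume a multiple of $1/m!$, but that doesn't bound it above. Instead I would use the simplest available estimate — the volume of $\mathcal{P}_n$ is at most the number of simplices in a triangulation times the maximum simplex volume, and crucially use that $\text{Vol}(\mathcal{P}_n) = 1/m$ from Corollary~\ref{cor:volume} — to get $1/m \le (\text{number of faces or simplices}) \cdot (1/m!)$ or a similar relation, which would force the vertex count to grow like $m!/m \cdot (\text{polynomial})$, which is far too strong and hence wrong; so the right estimate must instead bound volume by $N \cdot$(something like $2^{-m}$ per cube vertex is also wrong).

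The correct and clean route, which I expect the authors use, is: triangulate $\mathcal{P}_n$ using only its vertices, so $\mathcal{P}_n$ is a union of $m$-simplices each of whose vertices are cube vertices; the number of such simplices is at most $\binom{N}{m+1}$ where $N = |Q_n|$; each such simplex has volume at most $1/m!$ (the max volume of a simplex contained in $[0,1]^m$ with vertices at cube corners — in fact any simplex in the unit cube has volume at most $1/m!$ is false in general, but for simplices with $0/1$ vertices one has volume $\le$ roughly $(m+1)^{(m+1)/2}/(2^m m!)$ by Hadamard, which is more than $1/m!$; the safe universal bound is just that a simplex in $[0,1]^m$ has volume $\le 1$, giving nothing). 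Given the stated bound $B_\ell \ge 2^\ell/(16(\ell-4))$ is a \emph{lower} bound of the expected order $2^\ell/\ell$, the argument must run the \emph{other} way: use the $2n-1$ translates $\mathcal{P}_\sigma$ partitioning $C_m$, note each vertex of $C_m$ that is ``generic'' lies in the interior of exactly one translate, and count: the cube has $2^m$ vertices, of which all but a lower-order set are generic (not on the boundary hyperplanes $v\cdot w = 0$ for any ballot vector $w$ of any rotation), so at least $2^m - (\text{small})$ cube vertices are distributed among the $m$ translates $T_\sigma$, each isometric to $T_n$; hence $|T_n| \ge (2^m - E)/m$ where $E$ bounds the number of non-generic cube vertices. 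Then $B_{2n-1} = |T_n| \ge (2^{2n-1} - E)/(2n-1)$, and bounding $E \le$ (number of $0/1$ vectors annihilating some ballot vector, which forces two coordinates equal, giving $E \le (\text{polynomially many}) \cdot 2^{m-1}$, a constant factor loss) yields the constant $16$ and the shift $\ell - 4$. The main obstacle is getting an explicit, clean upper bound $E$ on the count of cube vertices lying on a ballot-hyperplane of some rotation $\sigma(V_n)$, so that $2^m - E \ge 2^m/8$ and the division by $m = \ell$ together with a factor $2$ slack produces exactly $2^\ell/(16(\ell-4))$; I would handle this by observing each offending hyperplane is $\{x : \sum \pm x_i = 0\}$ on a subset of coordinates of even size, a $0/1$ point lies on it only if the $+1$-coordinates and $-1$-coordinates in that subset have equal sums, and crudely at most half of all $0/1$ vectors can satisfy any one such nontrivial linear constraint, then union-bound over the $O(\ell^2)$ relevant hyperplanes — adjusting constants to absorb this into the clean final form.
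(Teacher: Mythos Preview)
Your proposal has a genuine gap. The union bound you suggest at the end is vacuous: there are $m(m-1) = O(\ell^2)$ relevant hyperplanes (one for each ballot vector in each of the $m$ rotations), and for a short ballot vector such as $[1,-1,0,\ldots,0]$ the hyperplane $x_1=x_2$ already contains $2^{m-1}$ cube vertices. Union-bounding gives $E \le O(\ell^2)\cdot 2^{m-1}$, which exceeds $2^m$ for all $\ell\ge 5$, so $(2^m-E)/m$ is negative and proves nothing. Even with the sharper per-hyperplane count $\binom{2k}{k}2^{m-2k}$ and summing over all lengths $k$, the total still blows up by a polynomial factor in $m$; no constant-fraction bound on $E$ follows from naive union-bounding.

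The paper avoids this issue entirely by working with $Q_n$ (all vertices of $\mathcal{P}_n$) and Theorem~\ref{thm:ballot_vertices} rather than with $T_n$ and Corollary~\ref{cor:ballot_vertices}. The point is that the \emph{closed} translates $\mathcal{P}_\sigma$ cover $C_m$ (Corollary~\ref{cor:volume}), so every one of the $2^m$ cube vertices lies in at least one $\mathcal{P}_\sigma$; and any cube vertex lying in $\mathcal{P}_\sigma$ is automatically a vertex of $\mathcal{P}_\sigma$ (the $m$ cube constraints are already active and independent there). Hence $m\,|Q_n| \ge 2^m$ with no error term at all. Since $|Q_n| = B_{2n+3}$ by Theorem~\ref{thm:ballot_vertices}, one takes $m = \ell - 4$ (not $m=\ell$), giving $B_\ell \ge 2^{\ell-4}/(\ell-4) = 2^\ell/(16(\ell-4))$. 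This also explains the otherwise mysterious $16$ and the shift $\ell-4$: they come from the four-step padding in $\alpha(v)$, not from bounding any exceptional set.
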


\begin{proof}
The inequality is trivial if $\ell \in \{1,3\}$, so assume $\ell \geq 5$. Let $m = \ell - 4$; this is $2n-1$ for some $n \in \N$. By Theorem \ref{thm:ballot_vertices}, we know that the vertices of $\mathcal{P}_n$ are in bijection with $B_{m+4}$. From Corollary \ref{cor:volume}, we know that every vertex of $C_{2n-1}$ is contained in $\mathcal{P}_\sigma$ for some $\sigma \in Z_m$. Since there are $m$ such copies of $\mathcal{P}$, we have
\begin{gather}
    m B_{m+4}\ \geq\ 2^m.
\end{gather}
By rearrangement we get
\begin{gather}
    B_\ell\ \geq\ \frac{2^\ell}{16 (\ell-4)}.
\end{gather}
\end{proof}

\begin{corollary}\label{cor:lower}
For $\ell$ odd,
\begin{gather}
    B_\ell\ \leq\ \frac{2^\ell}{\ell}.
\end{gather}
\end{corollary}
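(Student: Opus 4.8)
The plan is to bound $B_\ell$ from above by counting bidirectional ballot sequences via their lattice path interpretation. Recall that a BBS of length $\ell$ corresponds to a standard lattice path of length $\ell$ starting at $(0,0)$ whose unique minimum height is at the first point and whose unique maximum height is at the last point. First I would observe that since every prefix of the sequence contains strictly more ones than zeros, in particular the first symbol must be a $1$ and the last symbol must be a $1$ (the length-$1$ prefix and length-$1$ suffix each need more ones than zeros). So the path starts with a $(1,1)$ step and ends with a $(1,1)$ step, and the middle $\ell-2$ symbols can be arbitrary only up to the ballot constraints.

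Actually the cleanest route is the following. A BBS $b_1\cdots b_\ell$ must have $b_1 = 1$. Consider the map sending $b$ to its suffix $b_2 \cdots b_\ell$ of length $\ell - 1$: since every prefix of $b$ has strictly more ones than zeros, every prefix of $b_2\cdots b_\ell$ has \emph{at least as many} ones as zeros (removing a leading $1$), i.e.\ $b_2\cdots b_\ell$ is a (weak) ballot sequence — equivalently a Dyck-type path of length $\ell - 1$ that stays weakly above its starting level. Wait: for $\ell$ odd, $\ell - 1$ is even, and such weakly-above paths of even length $2k$ number $\binom{2k}{k}$. Hmm, that gives $\binom{\ell-1}{(\ell-1)/2}$ which is roughly $2^\ell/\sqrt{\ell}$, not strong enough. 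So I need to use the suffix condition as well.

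The right approach: I would use both constraints to force the first two and last two symbols. Since every length-$1$ and length-$3$ prefix needs strictly more ones than zeros, $b_1 = 1$ and among $b_1 b_2 b_3$ there are at least two ones, so $b_2 b_3 \neq 00$; similarly $b_{\ell-2}b_{\ell-1}b_\ell$ is constrained and $b_\ell = 1$. A cleaner and fully sufficient observation is simply: $b_1 = 1$ and $b_\ell = 1$ are forced, so $B_\ell$ is at most the number of binary strings of length $\ell$ with first and last bit equal to $1$, which is $2^{\ell - 2} = 2^\ell/4$. That is weaker than $2^\ell/\ell$. So brute forcing the endpoints is not enough; I genuinely need the full strength of the ballot condition applied roughly $\ell/4$ times.

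Here is the argument I would actually write. Since a BBS corresponds to a standard lattice path whose unique maximum is at the right endpoint, consider the height sequence $h_0 = 0, h_1, \dots, h_\ell$. I would show that for each $j$ with $1 \le j \le \lfloor (\ell-1)/4 \rfloor$, knowing the positions of ``down'' steps outside a window forces enough structure; concretely, the argument in \cite{Zh1} is that one can split and use that both halves are near-ballot. The honest thing given the tools available is: every BBS of length $\ell$ is in particular a path that stays strictly below its final height for all interior points, so appending the reverse bijection from Corollary~\ref{cor:ballot_vertices} — no. The simplest correct proof: partition $[0,1]^{2n-1}$ into the $m = 2n-1$ isometric copies $\mathcal{P}_\sigma$ from Corollary~\ref{cor:volume}; each cube vertex lies in at least one $\mathcal{P}_\sigma$, but a \emph{generic} cube vertex — one not on any of the hyperplanes $x \cdot w = 0$ for $w \in \sigma(V_n)$ — lies in exactly one, namely when its image under that $\sigma$ lies in the interior $T_n$-defining region. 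I expect the main obstacle to be handling the non-generic vertices: a vertex $v \in C_m$ could lie on the boundary between several $\mathcal{P}_\sigma$, so the $2^m$ cube vertices are distributed among the $m$ copies with multiplicity, giving only $m \cdot |T_n| \ge 2^m$ in the wrong direction. Thus to get the upper bound $B_\ell \le 2^\ell/\ell$ I would instead argue that \emph{every} cube vertex, when shifted to lie in $\mathcal{B}_n$, lands in $\Rn$ (using Theorem~\ref{theorem:cube}, vertices of the cube in $\mathcal{P}_n$ are vertices of $\mathcal{P}_n$), so the $m$ sets $\sigma^{-1}(\Rn) \cap S_n$ cover all $2^m$ cube vertices; since $|\Rn| = B_{2n+3}$ this would only bound $B_{2n+3}$, so the cleanest statement is via $T_n$: each cube vertex lies in exactly one $\sigma^{-1}(\mathcal{I}_n)$ when generic, and all $2^m$ vertices are generic because a cube vertex $v \in \{0,1\}^m$ has all the relevant dot products equal to nonzero integers (each $w \in V_n$ has entries summing to $0$ with equal numbers of $\pm1$, so $v \cdot w$ is an integer; I must check it is never $0$ at a cube vertex lying in $\mathcal{I}_n$ for some shift — this is the crux). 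Granting that, the $2^m$ cube vertices partition into the $m$ translates, each translate contributes $|T_n| = B_{2n-1} = B_m$ of them, so $m B_m \le 2^m$, i.e.\ $B_\ell \le 2^\ell/\ell$ with $\ell = m$. The one gap to fill carefully is that every cube vertex, after the unique necklace-cyclic-shift of Corollary~\ref{cor:necklace}, actually satisfies the \emph{strict} inequalities and hence lands in $T_n = \mathcal{I}_n \cap \Rn$ rather than merely on $\partial \mathcal{B}_n$; this should follow from a parity/integrality check on $v \cdot w$ for cube vertices $v$, which is the step I would expect to require the most care.
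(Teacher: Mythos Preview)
Your proposal wanders but eventually lands on the correct framework: use the sets $T_n = \mathcal{I}_n \cap Q_n$ (cube vertices in the open ballot cone) together with the cyclic shifts. However, you have the logic backwards at the crucial step, and the ``genericity'' claim you flag as the crux is both false and unnecessary.

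You assert that every cube vertex $v \in \{0,1\}^m$ has $v \cdot w \neq 0$ for all $w \in V_n$ ``by a parity/integrality check,'' so that the cube vertices partition exactly among the $m$ translates of $T_n$. This is simply not true: take $m=3$ and $v = [1,1,0]$, then $v \cdot [1,-1,0] = 0$. Many cube vertices sit on the boundary hyperplanes. Worse, if your partition claim held, you would conclude $m B_m = 2^m$ (equality, not $\le$), which contradicts the known asymptotic $B_m \sim 2^m/(4m)$.

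The paper's proof avoids this entirely by reversing the direction of the counting. One does not need the translates $\sigma(T_n)$ to \emph{cover} the cube vertices; one only needs them to be pairwise \emph{disjoint}. Since $T_n \subseteq \mathcal{I}_n$ and the interiors $\sigma(\mathcal{I}_n)$ are pairwise disjoint (Theorem~\ref{thm:permute_polytope}), the sets $\sigma(T_n)$ are pairwise disjoint. Each $\sigma(T_n)$ is a subset of $\{0,1\}^m$ of size $|T_n| = B_m$ (Corollary~\ref{cor:ballot_vertices}), so $m B_m = \sum_\sigma |\sigma(T_n)| \le |\{0,1\}^m| = 2^m$. That is the whole argument; no genericity or integrality check is required.
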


\begin{proof}
Suppose $\ell = 2n-1$. From Corollary \ref{cor:ballot_vertices}, we know that the vertices of $\mathcal{P}_n$ which are in the interior of $\mathcal{B}_n$, namely $T_n$, are in bijection with $B_m$. Since the interiors of $\mathcal{B}_{\sigma_1}$ and $\mathcal{B}_{\sigma_2}$ are disjoint if $\sigma_1 \neq \sigma_2$, we have that $\sigma_1 (T_n) \cap \sigma_2 (T_n) = \emptyset$ for $\sigma_1 \neq \sigma_2$. Therefore, summing over all the vertices in $\sigma(T)$ for each $\sigma \in Z_\ell$, we at most get every vertex of the cube once. That is,
\begin{gather}
\ell B_\ell\ \leq\ 2^\ell.
\end{gather}
Rearranging yields
\begin{gather}
B_\ell\ \leq\ \frac{2^\ell}{\ell}.
\end{gather}
\end{proof}

\begin{corollary}
For all $\ell$, the growth rate of $B_\ell$ is $\Theta(2^\ell/\ell)$.
\end{corollary}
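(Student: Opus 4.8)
The plan is to deduce the full claim $B_\ell = \Theta(2^\ell/\ell)$ directly from the two corollaries immediately preceding it, namely Corollary \ref{cor:upper} and Corollary \ref{cor:lower}, together with the elementary fact that $B_\ell = 0$ when $\ell$ is even. First I would handle the odd case: for $\ell$ odd and $\ell \geq 5$, Corollary \ref{cor:lower} gives $B_\ell \leq 2^\ell/\ell$, which is the desired upper bound with $\beta = 1$, and Corollary \ref{cor:upper} gives $B_\ell \geq 2^\ell/(16(\ell-4))$; since $\ell - 4 \leq \ell$ we have $2^\ell/(16(\ell-4)) \geq 2^\ell/(16\ell)$, so the lower bound holds with $\alpha = 1/16$. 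The small cases $\ell \in \{1,3\}$ can be absorbed into the implicit constants or simply noted to hold for $\ell$ sufficiently large, which is all the $\Theta$ statement requires.

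Next I would address the even case. A finite $0$-$1$ sequence of even length cannot be a bidirectional ballot sequence: if the length is $2k$, then the whole sequence is a prefix of itself, and a binary string of even length cannot have strictly more ones than zeros (the counts would have to differ by at least $2$, but they also must sum to an even number $2k$ — more precisely, $\#1 > \#0$ and $\#1 + \#0 = 2k$ force $\#1 \geq k+1$, which is consistent, so one must argue more carefully). Actually the cleaner argument: the whole sequence being its own prefix requires $\#1 > \#0$; the whole sequence being its own suffix requires the same; but additionally, looking at the lattice path interpretation, a path of even length from $(0,0)$ ending strictly above its start and above all intermediate points has endpoint height of the same parity as the length, so the height is even, hence $\geq 2$, which is fine — so instead I simply invoke parity of prefix sums: for a BBS, every prefix has positive sum, so in particular the prefix of length $1$ forces $b_1 = 1$, and the suffix of length $1$ forces $b_\ell = 1$; more to the point, reading off \eqref{eq:gpfunc1}-style parity, $f_\lambda(\ell) \equiv \ell \pmod 2$ where $f_\lambda(\ell) = \#1 - \#0 > 0$, but for the length-$\ell$ prefix to have more ones than zeros with $\ell$ even we need $\#1 - \#0 \geq 2$; this is not a contradiction, so the correct statement is simply that $B_\ell$ need not vanish for even $\ell$. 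Let me reconsider.

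In fact it is a standard fact (and easy to check from the definition) that BBS's of even length do exist — e.g. $1\,1\,0\,0$ fails (suffix $0$), but one can check small cases — so I should not claim $B_\ell = 0$ for even $\ell$. The safest route, and the one I would actually take, is: the preceding corollaries already establish $\alpha 2^\ell/\ell \leq B_\ell \leq \beta 2^\ell/\ell$ for all odd $\ell$; for even $\ell$ one uses the trivial monotonicity-type bounds $B_{\ell-1} \le 2 B_\ell \le 4 B_{\ell+1}$ or similar — actually the cleanest is that any BBS of length $\ell$ restricts/extends to control adjacent lengths — but since the problem only asks for the $\Theta$ growth rate "for all $\ell$", and since $2^{\ell}/\ell$ is comparable to $2^{\ell\pm 1}/(\ell\pm 1)$ up to constants, it suffices to exhibit crude bounds relating $B_\ell$ for even $\ell$ to $B_{\ell\pm1}$. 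I would prove $B_{\ell+1} \geq \tfrac14 B_{\ell-1}$ (insert a suitable pair, or a single symbol at an appropriate spot) and an analogous upper bound, giving $B_\ell = \Theta(B_{\ell-1}) = \Theta(2^{\ell-1}/(\ell-1)) = \Theta(2^\ell/\ell)$.

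The main obstacle is purely bookkeeping in the even case: producing honest, fully-rigorous interpolation bounds $B_\ell \asymp B_{\ell\pm 1}$ from the combinatorial definition, since—unlike Dyck paths—BBS's have no clean recursion, which is precisely the difficulty flagged in the introduction. If one wants to avoid even this, an alternative is to rerun the polytope construction with $m = 2n$ even; but Theorem \ref{thm:permute_polytope} crucially used $m$ odd (so that $\rho^2$ generates $Z_m$), so that route would require genuinely new work. Thus I expect the cleanest writeup to simply assemble the odd-$\ell$ bounds from Corollaries \ref{cor:upper} and \ref{cor:lower}, then dispatch even $\ell$ by a short elementary comparison with $B_{\ell-1}$, and conclude that in all cases there are constants $\alpha, \beta > 0$ with $\alpha 2^\ell/\ell \leq B_\ell \leq \beta 2^\ell/\ell$ for $\ell$ sufficiently large.
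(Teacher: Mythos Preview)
Your overall strategy matches the paper's: the odd case comes directly from Corollaries~\ref{cor:upper} and~\ref{cor:lower}, and even $\ell$ is handled by comparing $B_\ell$ with adjacent odd values. However, your write-up meanders through a false start ($B_\ell = 0$ for even $\ell$, which you correctly abandon) and then leaves the interpolation step unspecified --- you say you would prove ``$B_{\ell+1} \geq \tfrac14 B_{\ell-1}$ (insert a suitable pair, or a single symbol at an appropriate spot) and an analogous upper bound'' without actually doing so.

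The paper closes this gap with a single clean observation: for every $\ell$, appending a $1$ to a BBS of length $\ell$ yields a BBS of length $\ell+1$, so $B_{\ell+1} \geq B_\ell$. This one inequality, applied in both directions, sandwiches any even $B_\ell$ between $B_{\ell-1}$ and $B_{\ell+1}$, both of which are odd and already controlled. That is the ``single symbol at an appropriate spot'' you were gesturing at; once you name it, the proof is two lines rather than a paragraph of hedging.
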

\begin{proof}
By Corollaries \ref{cor:upper} and \ref{cor:lower}, we know that for $\ell$ odd, the growth rate is $\Theta(2^\ell/\ell)$. The only additional insight needed is that for all $\ell$, $B_{\ell+1} \geq B_\ell$. To see this, note that given a BBS of length $\ell$, by appending a 1 to the end of it, we obtain a BBS of length $\ell+1$. Thus up to fixed constants, the inequalities in Corollaries \ref{cor:upper} and \ref{cor:lower} are correct for even $\ell$ as well. Thus, for all $\ell$, $B_\ell$ grows like $\Theta(2^\ell/\ell)$.
\end{proof}

\section{Conclusion}

Our methods reveal a rich combinatorial structure underlying bidirectional ballot sequences. In previous papers on BBS's (\cite{Zh1}, \cite{BP}, \cite{HHPW}), analytic techniques were used to obtain asymptotics, but our techniques reveal a geometric interpretation for the $\Theta (2^n/n)$ growth rate. Interestingly, in the final section of \cite{Zh1}, Zhao states without detailed proof that $n B_n/2^n$ goes to $1/4$, but claims his proof is ``calculation-heavy''. He then posits that ``[t]here should be some natural, combinatorial explanation, perhaps along the lines of grouping all possible walks into orbits of size mostly $n$ under some symmetry, so that almost every orbit contains exactly one walk with the desired property.'' Zhao's statement is strikingly similar to the ideas presented in our paper. Though we have made some effort, we have not been able to derive that $n B_n/2^n \to 1/4$ using the techniques of our paper, but we feel that there is hope for such a proof.

The second, more general takeaway from this paper is the potential for the ideas originally presented in \cite{MP}. The ideas in this paper in fact evolved from the ideas in \cite{MP}. In passing to the continuous setting, several additive number theory and combinatorial problems reveal a rich structure which was not otherwise visible. We believe that there is even greater potential still in such ideas and techniques.

\bigbreak


\end{document}